\numberwithin{equation}{section}
\newcommand{\eg}{e.g.,\ }
\newcommand{\R}{\mathbb{R}}
\newcommand{\Z}{\mathbb{Z}}
\newcommand{\N}{\mathbb{N}}
\newcommand{\del}{\partial}
\newcommand{\overbar}[1]{\mkern 1.5mu\overline{\mkern-1.5mu#1\mkern-1.5mu}\mkern 1.5mu}
\newcommand{\object}{\mathcal{O}}
\newcommand{\beq}{\begin{equation*}}
\newcommand{\eeq}{\end{equation*}}
\newcommand{\beqn}{\begin{equation}}
\newcommand{\eeqn}{\end{equation}}
\newcommand{\beqa}{\begin{eqnarray*}}
\newcommand{\eeqa}{\end{eqnarray*}}
\newcommand{\beqan}{\begin{eqnarray}}
\newcommand{\eeqan}{\end{eqnarray}}
\renewcommand{\d}{{\,\mathrm{d}}}
\newcommand{\LM}[1]{\hbox{\vrule width.2pt \vbox to#1pt{\vfill \hrule width#1pt height.2pt}}}
\newcommand{\LL}{{\mathchoice{\,\LM7\,}{\,\LM7\,}{\,\LM5\,}{\,\LM{3.35}\,}}}
\def\underbar#1{\underline{#1}}
\def\etal{\emph{et al.\,}}
\theoremstyle{plain}
\newtheorem{thrm}{Theorem}[section]
\newtheorem{lmm}[thrm]{Lemma}
\newtheorem{prpstn}[thrm]{Proposition}
\theoremstyle{definition}
\begin{document}

\title{A $BV$ Functional and its Relaxation for Joint Motion Estimation and Image Sequence Recovery}

\author{Sergio Conti, Janusz Ginster, and Martin Rumpf}\date{}
\maketitle

\begin{abstract}
The estimation of motion in an image sequence is a fundamental task in image processing.
Frequently, the image sequence is corrupted by noise and one simultaneously asks for the underlying motion field and a restored  
sequence.
In smoothly shaded regions of the restored image sequence
the brightness constancy assumption along motion paths leads to a pointwise differential condition on the motion field.
At object boundaries which are edge discontinuities both for the image intensity and for the motion field this condition is no longer well defined. 
In this paper a total-variation type functional is discussed for joint image restoration and motion estimation.
This functional turns out not to be     lower semicontinuous, 
and in particular fine-scale oscillations may appear around edges.
By the general theory of vector valued
$BV$ functionals its relaxation leads to the appearance of
 a singular part of the energy density, which can be determined by the solution
of a local minimization problem at edges.
 Based on bounds for the singular part of the energy  
and under appropriate assumptions on the local intensity variation one can exclude the existence of microstructures
and obtain a model  well-suited for simultaneous image restoration and motion estimation.
Indeed, the relaxed model incorporates a generalized variational formulation of the brightness constancy assumption.  
The analytical findings are related to ambiguity problems in motion estimation such as the proper distinction
between  foreground and background motion at object edges.
\end{abstract}

\section{Introduction}\label{sec:intro}
In computer vision, the accurate computation of motion fields in
image sequences---frequently called optimal flow estimation---is a long standing problem, which has been addressed
extensively. For a general overview on optical flow estimation we refer to the survey by Fleet and Weiss \cite{FlWe06}. 
We consider an image sequence given via 
a grey value map 
\beq
u: (0,T) \times \Omega \to \R \,;\quad (t,x)\mapsto u(t,x)
\eeq
on a space time domain $D:=(0,T) \times \Omega$, 
where $\Omega$ is a bounded Lipschitz domain in $\R^d$ for $d=1,2,3$.
To begin with, we suppose that motion is reflected by the image sequence and
that image points
move according to a velocity field $v: D \to \R^d$.
Hence, constancy of grey values $u(t,x(t))$ along motion trajectories $t\mapsto x(t)$ with $\dot x(t) = v(t,x(t))$
leads to the transport equation
\beqn\label{eq:BCCE}
0= \frac{\d}{\d t} u(t,x(t)) = \partial_t u(t,x) + \nabla_x u(t,x(t)) \cdot v(t,x(t))
\eeqn
as a constraint equation for the unknown velocity field $v$. 
This constraint equation is generally known as the brightness constancy constraint and 
for the space time motion field $w = (1,v)$ it can be rewritten as $\nabla u(t,x) \cdot w(t,x) = 0$.
Here and in what follows $\nabla = (\partial_t, \nabla_x)$ denotes the space time gradient.
This condition gives us pointwise one constraint for $d$ unknown velocity components.
Indeed, only the component of the velocity orthogonal to isolines of the grey value can be 
computed from equation \eqref{eq:BCCE}, which leads to an illposed problem known 
as the aperture problem. 
Nagel and Otte \cite{NaOt94} and Tristanelli \cite{Tri95} suggested to consider second derivatives, i.e.~$\frac{d}{dt} \nabla_x u(t,x(t)) = 0$ along motion trajectories $t\mapsto x(t)$
leads to
$\del_t \nabla_x u + \nabla_x^2 u \, v = 0$ so that, if $(\nabla_x^2 u)(t,x)$ is invertible, $v(t,x)$ can be computed.
Similarly, in more geometric terms the motion field can be described via temporal 
variations of the shape operator on level sets as proposed by Guichard \cite{Gu98}.
Since second derivatives are involved, these pointwise approaches are vulnerable to noise and hence of difficult practical usability.
Based on the assumption that the image intensity $u$ varies on a finer scale than $v$,
one might assume $v$ to be locally constant and accumulate locally different constraint equations to estimate $v$. This dates back to the early work by Lucas
and Kanade \cite{LuKa81,WeBrSc03} 
or the {\em structure tensor approach} \cite{BiGr88}, which minimizes the local energy functional
$ \int_D \omega(t-s,x-z) (\nabla u(s,z) \cdot v(t,x))^2 \d z\d s$
for a local window function $\omega(\cdot,\cdot)$. 

This paper aims at consistently treating the general case with basically two different types of representations of motion in image sequences: \\[-2ex]
\begin{itemize}
\item[-] mostly smooth motion visible via spatial variations of object shading and texture and their transport in time,\\[-2.5ex]
\item[-]  motion represented by moving object edges, frequently characterized by discontinuities in the motion velocity apparent at edges of moving objects.  \\[-2.5ex]
\end{itemize}
The local approaches mentioned above are able to estimate the first type of motion and offer relatively high robustness with respect to noise but in general they do not lead to dense flow fields and fail to identify motion information concentrated on edges.
Global variational approaches were initiated by the work of Horn and Schunck \cite{HoSc81}.
They considered minimizers of the energy functional $\int_D |\nabla u \cdot w|^2 + \alpha |\nabla v|^2 \d x \d t$
implicitly assuming the optical flow field $v$ to be smooth. 
A rigorous numerical analysis for a finite difference discretization of the Horn--Schunck approach was performed by Le Tarnec \etal \cite{LeDe14}.
Nagel and Enkelmann \cite{NaEn86} proposed to use an anisotropic regularization term with a smaller penalization for  variations of $v$ in normal direction across edges. With a focus on real world applications,
Weickert \etal \cite{WeBrSc03} proposed a combination of local flow estimation and global 
variational techniques to combine the benefits of robustness and dense field representation,
respectively. 
For a detailed analysis of the occlusion problem associated with the estimation of object motion we refer to the joint approach for motion estimation and segmentation by Kanglin and Lorenz \cite{KaLo12}.
Ito  \cite{It05} suggested to treat the optical flow estimation in terms of an optimal control formulation. 
Brune \etal \cite{BrMa09} used the optimal control paradigm to estimate intensity and motion edges in image sequences.

Before discussing total variation type approaches for motion estimation ---to which our method belongs--- we
investigate a basic but already characteristic optical flow problem.\\[-2ex]
\emph{\paragraph{A simple model problem.} 
Suppose an object $\object$ with a shading or texture intensity map $u_1$ is moving with spatially constant 
velocity $v_1$ on a background with shading and texture intensity map $u_2$, which is itself moving with  constant velocity $v_2$. Thus, the observed image sequence is given by
\beqn \label{eq:imagefunc}
u(t,x) = \chi_\object(x-t v_1) u_1(x-t v_1) + (1-\chi_\object(x-t v_1)) u_2(x-t v_2)\,,
\eeqn
where $\chi_\object$ denotes the characteristic function of the object domain $\object$.
Trying to retrieve the object and the two velocities $v_1, \, v_2$ from the 
image sequence one observes the following:\\[-2ex]
\begin{itemize}
\item[(i)] If both image intensities $u_1$ and $u_2$ are constant the role of foreground and background can be flipped, i.e.~either the object $\object$  or its complement $D\setminus \mathcal{O}$ is moving with speed $v=v_1$ , whereas the background velocity $v_2$ obviously cannot be determined. \\[-2.5ex]
\item[(ii)] If both  shading or texture intensity maps 
$u_1$ and $u_2$ are smooth and locally allow the computation of $v_1$ and $v_2$ from \eqref{eq:BCCE}, the decision on foreground and background is associated with the consistency of one of the velocities $v_i$ $( i=1,2 )$ with the motion of the interface. Thereby, consistency  is expressed in terms of the singular counterpart $n \cdot w_i =0$ of \eqref{eq:BCCE}, where $w_i = (1,v_i)$ and $n$ denotes the space time normal on the interface, which coincides with the jump set $J_u$ of the image function $u$ from 
\eqref{eq:imagefunc}. \\[-2.5ex]
\item[(iii)] If  neither $u_1$ nor $u_2$ is consistent with the motion of the jump set $J_u$ then the object is  undergoing a more complex evolution than just a rigid motion, \eg growth or shrinkage.
\end{itemize} 
}
\noindent
In the general case, beyond this simple model problem, at each point on the jump set
 $J_u$ of the space time intensity map $u$ one should compare  the values of the velocity  $v^+$ and $v^-$ on
the two sides
with the space time interface normal $n$ on $J_u$ to decide on the actual local motion pattern. 
\emph{Hence, we are interested in a  variational approach which explicitly incorporates this local consistency test, where the motion data $v^+$ and $v^-$ on the two sides are determined  either via  local shading
 or texture data or from a global relaxation principle taking into account far field motion data.}

The space of functions of bounded variation ($BV$) allows to describe configurations with singularities of codimension one, i.e.~edge-type jump sets. Total variation regularization was first introduced in image processing by Rudin, Osher, 
and Fatemi \cite{RuOsFa92}.
Cohen \cite{Co93} proposed to replace the usual quadratic regularization in the above motion estimation approaches by a $BV$ type regularization $\int_\Omega |\nabla v| \d y$. A more general  
convex regularizer with linear growth was investigated by Schn\"orr and Weickert \cite{WeSchn01b}.
A broader comparison of different regularization techniques in imaging and a discussion of suitable quasi-convex functionals was given by Hinterberger \etal \cite{HiScScWe01}. 
In particular they considered a $W^{1,p}$- approximation of $BV$ type functionals.
Papenberg \etal \cite{PaBrBr06}
investigated  a $TV$ regularization of the motion field and an optical flow constraint involving higher order gradients. 
In their pioneering paper Chambolle and Pock \cite{ChPo11} suggested a duality approach for nonsmooth convex optimization problems  
in $BV$ and discussed as one application the $TV$ motion estimation problem.  
An improvement of the original ansatz was suggested by Wedel \etal \cite{WePo09} and
an efficient implementation of this primal dual optimization approach to optical flow estimation 
was presented recently by S\'anchez \etal \cite{SaMe13}.

The approach by Aubert and Kornprobst  \cite{AuKo99} is closely related to ours.
They considered for $d=2$ the energy functional 
\beqn\label{eq:AuDeKo}
E[v] = \int_D |\nabla u \cdot (1,v)| + \phi(\nabla v)  + {\alpha_v} |v|^2 \d x \d t\,,
\eeqn
extended to the space of velocity fields  of bounded variation,
where $u$ is a fixed image intensity map in $L^\infty \cap SBV$ and $\phi:\R^2 \to \R$ a function with linear growth. 
They discussed the following fundamental problem.
On the jump set $J_u$ of $u$, representing the space time edge surfaces, the singular part of the gradient $D u$ 
is only a Radon measure and in the generic case of moving objects one expects a significant overlap of
$J_u$ with the jump set $J_v$ of the motion field $v$. Thus, it is unclear how to define $D u \cdot w$.
This is indeed a recasting of the above observation (ii) in the context of the theory of functions with bounded variation.
Aubert and Kornprobst considered a locally averaged evaluation of the motion field, and they finally studied 
the relaxation of the above functional in $BV$.
In the case of  Lipschitz continuous image sequences, Aubert  \emph{et al.} \cite{AuDeKo99} considered the numerical 
approximation of the above total variation functional based on a duality approach and a suitable approximation
of $\phi$ with a sequence of functionals of quadratic growth.

Frequently the image sequence is corrupted by noise that one wishes to remove. 
On the theoretical side, we cannot assume that the input image intensity is already in 
$BV$. We study here a $BV$ approach for 
the joint reconstruction of non smooth space time intensity maps $u$ 
and the underlying non smooth optical flow fields $v$. It differs
 from the ansatz by Aubert and Kornprobst  \cite{AuKo99} in that here \emph{both} fields are reconstructed simultaneously.
We investigate in particular the relation to the above-discussed fundamental observations (i)---(iii) on motion estimation
and show that the $BV$ approach naturally incorporates a local analysis of the motion pattern in the vicinity of the jump set $J_u$. 
Depending on the data, minimizing sequences may develop 
 undesirable small-scale oscillations around interfaces.  
Analytically this means that  the functional is not lower semicontinuous.
The theory of relaxation permits to replace the functional by its lower
semicontinuous envelope, thereby eliminating the fine-scale oscillations
from the kinematics, but still incorporating their averaged effect 
in the energetics. The key technical ingredient we use here is
 a general result on the relaxation of variational problems on
vector-valued $BV$ functions by M\"uller and Fonseca \cite{FoM93} and earlier work on relaxation on $BV$ functions by Ambrosio and Dal Maso \cite{AmbrosioDalmaso1992} and Aviles and Giga \cite{AvilesGiga1991}.
It leads to the relaxed functional presented in (\ref{eqfstar}) below.
The ambiguity close to the jump set is then resolved by minimizing locally a
suitable  microscopic problem, which in turn leads to a selection of 
 the relevant local  motion pattern, see (\ref{eq:definitionK}) below.
Using upper and lower bounds on the relaxed energy we can show that under suitable (implicit) assumptions on 
the image intensity map $u$ and the motion field $v$ such microscopic oscillations can be ruled out.
 
The advantages of the joint estimation of intensity $u$ and motion field $v$ are the following:\\[-2.5ex]
\begin{itemize}
\item[-]  A reliable segmentation of moving objects via the non smooth intensity map helps to estimate their motion.
\item[-] Given the motion field, the brightness constancy assumption along motion paths significantly improves
the denoising of the image sequence or even the restoration of missing frames.
\item[-] Reliable motion detection also poses an important cue for object detection and recognition. 
\end{itemize}
Thus, joint approaches which simultaneously estimate the motion field, segment objects and denoise the image sequence
are particularly appealing. Advances in this direction were
investigated in \cite{ScStYu07,OdBo98,CASELLES1996,MEMIN1998,PARAGIOS2000}. 
A first approach which relates optical flow estimation to
Mumford--Shah image segmentation was presented by Nesi \cite{Ne93}.
Cremers and Soatto \cite{cremers2002,cremers2003,CrSo05} gave an extension of the Mumford-Shah functional from intensity segmentation to motion based segmentation in terms of a probabilistic framework. 

Rathi \etal investigated active contours for joint segmentation and optical flow extraction~\cite{RaVaTaYe05}. 
Brox \etal \cite{BrBrWe05} presented a Chan-Vese  type model for piecewise smooth motion extraction. 
For given fixed image data the decomposition of image sequences into regions of homogeneous motion is encoded in a set of level set functions and the regularity of the motion fields in these distinct regions is controlled by a total variation functional. 
Indeed, Kornprobst \etal \cite{KoDeAu99} already studied a joint approach for the segmentation of moving objects in front of a still background and the computation of the motion velocities.

The paper is organized as follows. 
In Section \ref{sec: Relaxation} we introduce the variational approach via the definition of a suitable energy on space time intensity maps $u$ and motion fields $v$ and retrieve the general relaxation result for this type of energies.
The functional we propose to use is given 
in (\ref{eqfstar}-\ref{eq:definitionK}).
On the jump sets of $u$ and $v$ the integrand of the relaxed functional involves a microscopic variational problem.
The main contribution of this paper is to establish bounds for this microscopic energy and to give sufficient conditions for 
the non existence of microscopic oscillations.
In Section \ref{sec:consequences} we discuss the consequences of our results for optical flow estimation.  
Then, in Section \ref{sec:proofs} we present the proofs of  the  results discussed in  Section \ref{sec: Relaxation}.

\section{Variational approach and relaxation results} \label{sec: Relaxation}
In this section we  derive a joint functional for the restoration of a space time image sequence and the 
estimation of the underlying motion field. We start by fixing some notation.
We denote by $\Omega$ the image domain, a bounded Lipschitz-domain in $\R^d$ with $d\geq 1$, 
and by $D = (0,T) \times \Omega \subset \R^{d+1}$ the associated space time domain.
For a spatial vector $x \in \R^d$ we write $x = (x_1, \dots, x_d)$, while time-space vectors are denoted by
$y= (y_0, y_1, \ldots y_d)\in \R^{d+1}$ with $t=y_0$ being the time coordinate. 
Correspondingly,  the space time gradient reads as $\nabla=\nabla_{y} = (\partial_t, \nabla_x)$.
We use $| \cdot |$ for the Euclidean norm including all matrix spaces.
We use standard notation $L^p$  and $W^{1,p}$ with $1 \leq p \le \infty$ for Lebesgue and Sobolev spaces, respectively. 
By the fundamental decomposition result \cite{AmFuPa00,EvGa92} the derivative of a function of bounded variation 
$f \in BV\left(\R^k;\R^l\right)$  can be written as
\begin{equation*}
 Df = \nabla f \mathcal{L}^k + D^c f + [f]\otimes n \, \mathcal{H}^{k-1}\LL J_f\, 
\end{equation*}
where $\left(a \otimes b\right)_{i,j} = a_i b_j$ for $i=1, \cdots,l$ and $j=1, \cdots, k$.
Here $J_f$ is the jump set of $f$, $[f] = (f^+ - f^-)$ is the jump,  $f^+,f^-: J_f \to \R^l$ are the approximate limits on $J_f$, $n:J_f\to  S^{k-1}$ is the measure theoretic normal to $J_f$ and $D^c f$ is the Cantor part of the measure $Df$, orthogonal to both $\mathcal{L}^k$ and $\mathcal{H}^{k-1}\LL J_f$. 
The jump part of $Df$ is denoted by 
$D^jf=[f]\otimes n \, \mathcal{H}^{k-1}\LL J_f$.

Let $u_0  \in L^p(D)$ represent the input data, 
 a given grey valued image sequence,   with $p < 1^* := \frac{d+1}{d}$.
Our aim is to determine  a restored image sequence $u: D \to \R$ and an underlying motion field $v: D \to \R^d$.
Throughout this paper we use the shortcut notation $w=(1,v)$ for the space time motion field.
Since we expect that both the reconstructed  image sequence and the
reconstructed image velocities will jump on the boundaries of reconstructed
moving objects, which are codimension $1$ surfaces in  space time,
we consider $BV(D)$ as the suitable space for 
intensity maps and $BV(D;\R^d)$ as the suitable space for velocity fields.

We start by defining a functional $F$ measuring the quality of the restoration and the motion extraction for an
image sequence $u\in W^{1,1}(D)$ and a motion field $v\in W^{1,1}(D;\R^d)$. The actual functional on $BV(D)\times BV(D;\R^d)$ will then 
be defined via relaxation. For fixed $M$ and ${\alpha_F},\,{\alpha_v},\,{\alpha_u} >0$ we consider the following energy integrand
$g:\R^d\times \R^{d+1}\times\R^{d(d+1)}\to[0,\infty)$
\begin{equation*}
g\left(v,p, q\right) = {\alpha_F} \left| w \cdot p\right|  + {\alpha_v} \left| q \right| + {\alpha_u} \left|p\right|  
\end{equation*}
and define for a general Lipschitz domain $U \subset \R^{d+1}$ the energy
\beqn
E[u,v,U] = \begin{cases} 
\int_{U} g\left(v,\nabla u,\nabla v\right) \text{ d}y &\text{ if } (u,v) \in W^{1,1}(U) \times W^{1,1}(U;\R^d);\, \|v\|_{L^{\infty}} \leq M \\
\infty &\text{ otherwise.}
\end{cases}
\label{eq: energy}
\eeqn 
This energy is then complemented 
 with a fitting term with respect to the given image 
sequence $u_0\in L^p(D)$ to obtain the functional
\begin{equation*}
F[u,v,D] = \left\| u - u_0\right\|^p_{L^p\left(D\right)} + E[u,v,D]\,.
\end{equation*}
The energy $E[u,v,D]$ is finite for $(u,v) \in W^{1,1}(D) \times W^{1,1}(D;\R^d)$
and $ \|v\|_{L^{\infty}(D)} \leq M$
with ${\alpha_v} \int_D |\nabla v| \d y$ and ${\alpha_u} \int_D |\nabla u| \d y$ measuring the regularity of the motion field $v$ and the 
image sequence $u$, respectively. Furthermore, 
$${\alpha_F} \int_D |w\cdot \nabla u| \d y = {\alpha_F} \int_D |\partial_t u + v \cdot \nabla_x u| \d x \d t$$
quantifies the agreement of the pair $(u,\, v)$ with the brightness constancy constraint \eqref{eq:BCCE}.
We remark that for general input image sequences $u_0$ bounds on the energy do 
not imply a priori 
bounds on the motion field in $L^{\infty}$, 
for example in the case that $u_0$ is spatially uniform.
The constraint on $\|v\|_{L^{\infty}}$ has been included to avoid
 this technical difficulty.

At this point we are ready  to define the actual functional of interest for $(u,v) \in BV(D) \times BV(D;\R^d)$
as the relaxation $F^*$ of the functional $F$ with respect to convergence in $L^1(D)$:
\beqn
F^*[u,v,D] = \inf \left\{ \liminf_{k\to \infty} F[u_k,v_k,D] \,\Big|\, (u_k,v_k) \in L^1(D;\R^{d+1});\, (u_k,v_k) \!\stackrel{L^1}{\rightarrow} \!(u,v) \right\}\,.
\label{eq:Fstar}
\eeqn 
As already mentioned in the introduction in the generic case of optical flow applications the jump set $J_v$ of the motion field (the union of boundaries of moving objects)  is a subset of the jump set  $J_u$ of the image intensity (the union of all image edges).
Hence, $v$ is expected to jump on a subset of the support of the jump
 part $D^ju=[u]\otimes n \mathcal{H}^{d}\LL J_u$
of the measure $Du$, so that the term
$(1,v)\cdot Du$ is ill-defined. A proper understanding of this term requires
to select locally a microscopic profile 
for $u$ and $v$ and 
includes ---but will not be restricted to--- the proper choice 
between $v^+$ or $v^-$ for the pointwise value for $v$
in the term $(1,v)\cdot D^j u$ and thus the local selection between foreground and background (cf. the consistency issue (ii) in the simple model problem in Section \ref{sec:intro}).
In fact, the theory of relaxation  for problems with linear growth is more complex than
the one with $p$-growth, $p>1$  \cite{Da89}, because of the singular part of the gradient  in the limit.
Relaxation and lower semicontinuity with a convex integrand depending only on the gradient field were already obtained in
the 60s \cite{GoffmanSerrin1964,Resetnjak1967}, the general case, with
dependence of the integrand also on $x$ and $u$ was investigated in
 the early 90s
\cite{AmbrosioDalmaso1992,AvilesGiga1991}. Here, we use the more general
 result by Fonseca and M\"uller in \cite{FoM93}, which also includes the
case  of quasiconvex integrands,  as the starting point of our investigation.
Successive developments include \cite{BoFoMa98,BoFoLeMa02,FuscoGoriMaggi2006,KristensenRindler2010,Rindler2012}.

These results show that  the  relaxation $F^*$ of $F$ 
with respect to the $L^1$-topology, as defined in (\ref{eq:Fstar}),
is finite on 
the domain
\begin{equation*} 
BV(D) \times \left(BV(D; \R^{d}) \cap \left\{v \in L^{\infty}: \left\| v \right\|_{L^{\infty}} \leq M \right\}\right)
\end{equation*}
and it equals
\begin{align}\nonumber
F^*\left[u,v,D\right] = &\left\| u - u_0 \right\|_{L^p\left(D\right)}^p 
+ \int_{D} g\left(v,\nabla u, \nabla v\right) \text{ d}y
+ \int_{D} g\left(v,D^c u, D^c v\right) 
\\+ &\int_{J_{(u,v)}} K(u^+,u^-,v^+,v^-,n) \text{ d}\mathcal{H}^{d}.
\label{eqfstar}
\end{align}
Since $g$ is one-homogeneous and convex in the second and third arguments, 
$g$ coincides with its regression function and
the third term in (\ref{eqfstar}) should be interpreted as 
\begin{equation*}
  \int_D g\left(v, \frac{d D^cu}{d(|D^cu|+|D^cv|)}, \frac{d D^cv}{d(|D^cu|+|D^cv|)}\right)
d(|D^cu|+|D^cv|)\,.
\end{equation*}
The key nonconvexity arises from the dependence of $g$ on the first variable
in the jump part.
Thereby, the function $K: \mathbb{R} \times \mathbb{R} \times \mathbb{R}^{d} \times \mathbb{R}^{d} \times S^d \rightarrow \mathbb{R}$ 
depending on the approximate limits $\left(u^{+},v^+\right)$ and $\left(u^{-},v^-\right)$ of $(u,v)$ 
and the measure theoretic normal $n$ on the jump set $J_{(u,v)}$ is the solution of a local minimizing problem in which the energy on the jump is optimized with respect to all possible microstructures. Precisely, 
\begin{equation}\label{eq:definitionK} 
 K\left(u^{+}, u^{-},v^{+},v^{-}, n\right) = \inf \left\{ E[u,v,Q_{n}]: (u,v) \in \mathcal{A} \right\}\,,
\end{equation}
where $Q_n$ is the rotated cube
\begin{equation*}
Q_{n} = \left\{ y \in \mathbb{R}^{d+1}: \left| y \cdot n \right| < \frac{1}{2}, \left| y \cdot m^1\right|  < \frac{1}{2}, \dots, \left| y \cdot m^{d} \right| < \frac{1}{2} \right\},
\end{equation*}
with $\{n,m^1,\dots,m^{d}\}$ denoting an orthonormal basis of $\mathbb{R}^{d+1}$,
and  $\mathcal{A}$ is the set of $W^{1,1}$-functions which have traces
$u^{\pm}, v^{\pm}$ on the two sides of $Q_{n}$ normal to $n$ and are periodic
in the $m^1, \dots, m^{d}$- directions,
\begin{align*}
\mathcal{A} = \Bigg\{& (u,v) \in W^{1,1}\left(Q_n;\R^{1+d}\right): u = u^{\pm} \text{ and } v = v^{\pm} \text{ on } \partial Q_{n} \cap \left\{ y \cdot n = \pm \frac{1}{2} \right\}, \\ 
&u(y) = u\left(y + m^i\right) \text{ and }  v(y) = v\left(y+ m^i\right) \text{ on } \left\{y\cdot m^i = - \frac12\right\} \Bigg\}.
\end{align*}

We remark that the term $\left\| u - u_0 \right\|_{L^p\left(D\right)}^p $ is continuous in $u$ with respect to the weak convergence in $BV$, since we chose $p<1^*$.
Furthermore, both $F$ and $F^*$ are coercive in $BV$, in the sense that for any given $u_0$ and for any  sequence $(u_j,v_j)$ with $F^*[u_j,v_j,D]$  
bounded, the sequence of $BV$ norms of $u_j$ and $v_j$ are also bounded. 
Therefore the above representation of the relaxation 
follows immediately from the  more general statement of Fonseca and
M\"uller \cite[Theorem 2.16]{FoM93} and existence of minimizers for the
 relaxed functional $F^*$ follows easily by
the direct method of the calculus of variations.

A practical usage of the functional $F^*$ requires knowledge of the effective 
surface energy $K$, much as in the case of relaxation on $W^{1,p}$ spaces 
one needs to determine the quasiconvex envelope of the integrand
\cite{Da89}. A numerical computation is in principle feasible 
via the minimization in (\ref{eq:definitionK}), but it is nevertheless useful 
to extract analytical information on $K$ as far as possible.
In what follows we give lower and upper bounds for the singular term $K$ 
and compute it explicitly in special cases. 
Thereby, we show that the model favors locally simple (i.e.~planar) profiles in the microscopic problem \eqref{eq:definitionK} 
under reasonable assumptions from the viewpoint of the practical optical flow application. 
This renders the functional $F^*$ well-suited for joint image sequence restoration and motion extraction. 
A detailed discussion of the consequences of the bounds for $K$ is postponed to Section \ref{sec:consequences}.

\begin{thrm}\label{theorem:bounds}
Given $u^+,u^- \in \R$, $v^+,v^- \in \R^d$ and $n\in S^d$ for $K = K\left(u^+,u^-,v^+,v^-,n\right)$  the following statements hold:
\begin{enumerate}
\item  \label{item:ofcsatisfied} 
If $\left(w^+ \cdot n\right) \left(w^- \cdot n\right)\le 0$ then   
$K = {\alpha_u} \left|[u]\right| + {\alpha_v} \left|[v]\right|\,,$ 
\item \label{item:1dbounds}  
For $d=1$ and $\left|[u]\right|  \leq \frac{2{\alpha_v}}{{\alpha_F}}$ one has 
 $\min\limits_{N^+ \in\R^2}\underline{K}(N^+) \leq K \leq \min\limits_{N^+ \in\R^2} \overbar{K}(N^+)$, where
\begin{align*}
 \overbar{K}(N^+) &= 
\left({\alpha_u} \left|[u]\right| + {\alpha_v} \left|[v]\right|\right)\left(\left|N^+\right| + \left|N^-\right|\right) 
 + {\alpha_F} \left|[u]\right| \left( \left|N^+ \cdot w^+\right| + \left| N^- \cdot w^-\right| \right)\,, \\
 \underline{K}(N^+) &= 
\left({\alpha_u}  \left|[u]\right| + 
{\alpha_v} \left|[v]\right|\right)\left(\left|N^+\right| + \left|N^-\right|\right) 
 + {\alpha_F} \left|[u]\right|  \left|N^+ \cdot w^+ + N^- \cdot w^-\right|\,,
\end{align*}
with $N^- = n - N^+$,
\item \label{item:ddbounds} 
For general  $d\ge 1$ one has  $K\le \overbar{K}_d$, where
\begin{equation*}
 \overbar{K}_d = \min \!\left(\sum_{j = 1}^l \left( {\alpha_u} \left|[u]\right| +
   {\alpha_v} \left|v^+\! - \!v^j\right|+  {\alpha_v} \left|v^-\! -\! v^j\right|\right)
 \left|N^j\right|  
 + {\alpha_F} \left|[u]\right| \sum_{j = 1}^l \left|N^{j}\! \cdot \!w^j\right|\right)\,,
\end{equation*}
where $w^j = \left(1,v^j\right)$ and the minimum is taken over $l\in\N$ and the
set of vectors  $v^1, \dots, v^l \in \R^d$  and $N^1, \dots N^l \in \R^{d+1}$,
subject to  $\sum_{j=1}^l N^j = n$.
 \end{enumerate}
\end{thrm}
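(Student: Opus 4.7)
My plan is to prove the three parts by combining explicit $W^{1,1}$ competitors (for every upper bound) with a one-dimensional slicing argument for the lower bound in (i) and a two-dimensional geometric-measure-theoretic argument for the lower bound in (ii), the latter being the main obstacle.

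For part (i) the lower bound $K \ge \alpha_u|[u]| + \alpha_v|[v]|$ comes from the pointwise inequality $g(v,p,q) \ge \alpha_u|p| + \alpha_v|q|$ combined with the trace estimates $\int_{Q_n}|\nabla u|\d y \ge |[u]|$ and $\int_{Q_n}|\nabla v|\d y \ge |[v]|$, both of which follow from Fubini and the fundamental theorem of calculus along lines parallel to $n$. For the matching upper bound I would exploit the sign hypothesis $(w^+\cdot n)(w^-\cdot n)\le 0$ to choose $\lambda\in[0,1]$ with $(\lambda w^+ + (1-\lambda) w^-)\cdot n = 0$ and set $v^* = \lambda v^+ + (1-\lambda) v^-$, so that $w^* = (1,v^*)$ satisfies $w^*\cdot n = 0$. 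The competitor then depends only on $y\cdot n$: in a slab of width $\delta$ around the interface, $v$ interpolates linearly from $v^-$ to $v^*$ in the first third, $u$ interpolates from $u^-$ to $u^+$ with $v\equiv v^*$ in the middle third (annihilating the brightness-constancy term because $w^*\cdot n = 0$), and $v$ interpolates from $v^*$ to $v^+$ in the last third. Since $v^*$ lies on $[v^-,v^+]$, the total energy equals $\alpha_u|[u]| + \alpha_v|[v]|$ exactly.

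The upper bounds in (ii) and (iii) would come from a common multi-layer fan construction. Given vectors $v^1,\dots,v^l \in \R^d$ and $N^1,\dots,N^l \in \R^{d+1}$ with $\sum_j N^j = n$, I would build on a small rescaled cell, periodic in the $m^1,\dots,m^d$ directions, a piecewise constant $(u,v)$ whose jump set consists of $l$ families of planar pieces with unit normals $N^j/|N^j|$ and total area $|N^j|$, separating an outer region with values $(u^+,v^+)$ from an inner region with values $(u^-,v^-)$; the identity $\sum_j N^j = n$ is precisely the closure condition that makes such a separating interface fit consistently in the cube after rescaling. Each planar piece is then smoothed by the three-sublayer recipe from (i), but with the intermediate value $v^j$ in place of $v^*$, so that the middle sublayer contributes $\alpha_F|[u]||N^j\cdot w^j|$ to the brightness-constancy term while the outer and inner sublayers contribute $\alpha_v|v^+\!-v^j||N^j|$ and $\alpha_v|v^-\!-v^j||N^j|$ respectively. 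Summing over $j$ and sending the sublayer thicknesses to zero yields $\overbar{K}_d$; specializing to $l=2$, $v^1 = v^+$, $v^2 = v^-$, $N^2 = n - N^+$ recovers $\overbar{K}(N^+)$.

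The main obstacle is the lower bound in (ii), which crucially exploits the planar geometry $Q_n\subset\R^2$. My plan proceeds in two steps. First, I would use the quantitative threshold $|[u]|\le 2\alpha_v/\alpha_F$ to reduce to competitors in which $v$ takes values only on the segment $[v^-,v^+]$: deflecting $v$ off this segment by an amount $\eta$ along a piece of interface of length $\ell$ costs at least $2\alpha_v\eta\ell$ in the $v$-regularization but saves at most $\alpha_F|[u]|\eta\ell$ on the brightness-constancy term, so the stated threshold is exactly the break-even point. Second, for a segment-valued competitor I would partition $Q_n$ into the two subregions on which $v$ is closer to $v^+$ and to $v^-$, and define the ``oriented fluxes'' $N^\pm$ by normalizing $\int_{\text{subregion}} Du$ through division by $[u]$; the constraint $N^+ + N^- = n$ then follows from the boundary conditions on $u$ via the divergence theorem. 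Trace estimates applied separately in the two subregions give the $(\alpha_u|[u]| + \alpha_v|[v]|)(|N^+|+|N^-|)$ contribution, while the single absolute value $|N^+\cdot w^+ + N^-\cdot w^-|$ in the brightness-constancy part arises from applying the triangle inequality to $\int_{Q_n} w\cdot \nabla u\,\d y$ globally rather than on each subregion separately. Making the reduction to segment-valued $v$ fully rigorous via a careful rearrangement argument is the delicate technical step.
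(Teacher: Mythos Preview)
Your treatment of part \ref{item:ofcsatisfied} and of the upper bounds in \ref{item:1dbounds} and \ref{item:ddbounds} is essentially correct and close to the paper's argument. The paper organizes the upper bounds differently---it first proves abstract subadditivity and one-homogeneous convexity lemmas for $K$, then obtains \ref{item:ddbounds} by writing $h(\sum N^j)\le\sum h(N^j)$ and bounding each summand via subadditivity with the intermediate value $v^j$---but your direct fan construction is the same idea unpacked. One caveat: for general $l$ and $d\ge2$ the planar pieces need not fit together rank-one compatibly, so a thin transition layer is required between laminates; this is a genuine technical point your sketch suppresses, and it is exactly why the paper prefers the modular route.

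The real gap is in your lower bound for \ref{item:1dbounds}. Two issues:

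First, you never reduce $u$ to a characteristic function. The paper does this by a coarea argument (choosing a good level $t^*$), which is what makes the jump set $\gamma$ and its normal $n^\gamma$ available in the first place; without it there is no ``piece of interface of length $\ell$'' to speak of.

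Second, and more seriously, your proximity partition does not deliver the brightness term. With $u=u^-+[u]\chi_{Q_n^+}$ and $v\in[v^-,v^+]$ (after truncation), write $v=v^-+[v]\theta$ with $\theta\in[0,1]$. Then
\[
\int_{Q_n} w\cdot Du = [u]\Bigl(n_0+\int_\gamma v\,n_1^\gamma\,\d\mathcal{H}^1\Bigr)
= [u]\Bigl(w^-\cdot n+[v]\int_\gamma \theta\,n_1^\gamma\,\d\mathcal{H}^1\Bigr),
\]
so to get $N^+\cdot w^++N^-\cdot w^-=w^-\cdot n+[v]N_1^+$ you need $N_1^+=\int_\gamma\theta\,n_1^\gamma$, i.e.\ $N^+=\int_\gamma\theta\,n^\gamma$. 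Your definition instead gives $N^+=\int_{\gamma\cap\{\theta>1/2\}}n^\gamma=\int_\gamma\chi_{\{\theta>1/2\}}\,n^\gamma$, which is simply a different number. The paper closes this gap by observing that the relevant lower-bound functional, once rewritten as a function of $v|_\gamma$ alone, is \emph{affine} in $v|_\gamma$ and hence weak-$*$ continuous on $L^\infty(\gamma,[v^-,v^+])$; since two-valued functions are weak-$*$ dense in this set, one may pass to $v_*\in L^\infty(\gamma,\{v^-,v^+\})$ at arbitrarily small cost, and \emph{then} the partition $\gamma^\pm=\{v_*=v^\pm\}$ gives exactly the formula you want. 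Your ``careful rearrangement argument'' would have to reproduce this extremality step; as stated, the proximity cut does not.

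Your heuristic for the truncation threshold is in the right spirit but also not yet a proof: the ``$2\alpha_v\eta\ell$'' cost relies on the deflection region being a thin tube along $\gamma$, which need not hold. The paper's rigorous version uses Gauss--Green on $Q_n^\pm$ separately to get $\int_\gamma(\bar w-w)\cdot n^\gamma=\int_{Q_n^-}\operatorname{div}(\bar w-w)=-\int_{Q_n^+}\operatorname{div}(\bar w-w)$, then averages the two representations to extract the factor $\tfrac12$ in $\tfrac12\int_{Q_n}|\nabla\bar v-\nabla v|$; this is where the precise threshold $|[u]|\le2\alpha_v/\alpha_F$ comes from.
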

The proof is given in  Section \ref{sec:thmproofs}.

The upper bounds in  Theorem \ref{theorem:bounds} are based on suitable choices for the optimal microscopic solution $u$, $v$
of \eqref{eq:definitionK}. If the profile of these microscopic solutions $u,v$ depend only on $y\cdot n$, we call it \emph{simple}.
There are in particular two types of simple profiles when solving \eqref{eq:definitionK} for given data $(u^-,v^-)$, $(u^+,v^+)$, and $n$.
In the case $\left(w^+ \cdot n\right)\left(w^- \cdot n\right)\le 0$
 we define the following piecewise constant
functions $u\in BV(Q_n)$ and $v\in BV(Q_n;\R^d)$:
\begin{equation}\label{eqconstrvvva}
 u(y) = \begin{cases}
         u^- &\text{ if } y \cdot n < 0 \\
         u^+ &\text{ if } y \cdot n \geq 0
        \end{cases}
\hskip3mm\text{ and } \hskip3mm
v(y) = \begin{cases}
        v^- &\text{ if } y \cdot n < -\frac13 \\
        v^0 &\text{ if } -\frac13 \leq y \cdot n < \frac13 \\
        v^+ &\text{ if } y \cdot n \geq \frac13
       \end{cases}\,,  
\end{equation}
where $v^0$ is chosen in the line segment $[v^-,v^+]$ such that 
$w^0 \cdot n =0$ for $w^0=(1,v^0)$.
For suitable approximations in $W^{1,1}$ such as those obtained by convolution $u_k = \frac{k}{2}\chi_{\{|y\cdot n| <\frac{1}{k}\}} \ast u$, 
$v_k = \frac{k}{2}\chi_{\{|y\cdot n| <\frac{1}{k}\}} \ast v$ we have that 
\beqn\label{eq:Kopt1}
K(u^+,u^-,v^+,v^-,n) \leq \liminf_{k\to \infty} E[u_k,v_k,Q_k] = {\alpha_v} |[v]| + {\alpha_u} |[u]|\,,
\eeqn
where $[v] = v^+-v^-$ and $[u] = u^+-u^-$. 
This profile corresponds to a  microscopic consistency with a $BV$ interpretation of the brightness constancy constraint
\eqref{eq:BCCE}.
In the other case $\left(w^+ \cdot n\right) \left(w^- \cdot n\right)>0$ 
consistency of a simple and optimal microscopic profile is out of reach. Let us assume that
 $\left|w^{-} \cdot n\right| \leq \left|w^{+} \cdot n\right|$.
Then a feasible, simple profile for the minimization of $E[\cdot, \cdot, Q_n]$ is given by
\beq
  u(y) = \begin{cases}
         u^- &\text{ if } y \cdot n < 0 \\
         u^+ &\text{ if } y \cdot n \geq 0
        \end{cases}
\hskip3mm \text{ and } \hskip3mm
  v(y) = \begin{cases}
         v^- &\text{ if } y \cdot n < \frac{1}3 \\
         v^+ &\text{ if } y \cdot n \geq  \frac{1}3
        \end{cases}\,.
\eeq
Indeed, again using the same ansatz for the approximation as above, one obtains
\beqn\label{eq:Kopt2}
K(u^+,u^-,v^+,v^-,n) \leq \liminf_{k\to \infty} E[u_k,v_k,Q_k] = {\alpha_F} |w^{-}\cdot n| + {\alpha_v} |[v]| + {\alpha_u} |[u]|\,.
\eeqn
Figure \ref{figure: simpleprofiles} shows sketches of the simple profiles in both cases.
In the next theorem we establish sufficient conditions for the existence of minimizing profiles which are simple.
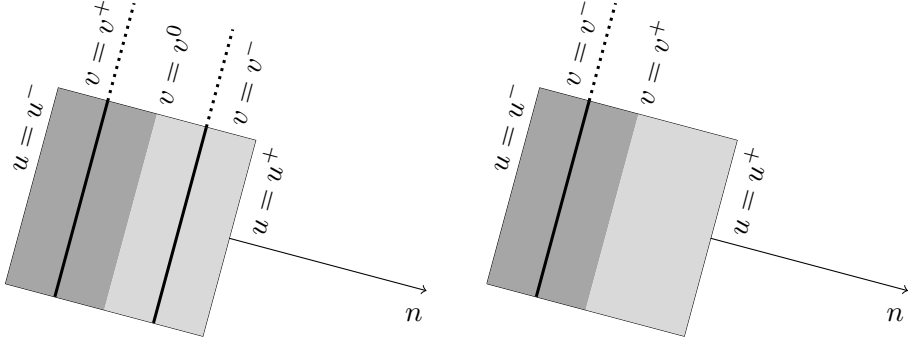
\begin{figure}
 \centering
\begin{tikzpicture}[rotate=300, scale=0.95]
\draw (0,0) -- (-2,2) -- (0,4) -- (2,2) -- (0,0);
\fill[gray!70] (1,1) -- (-1,3) -- (-2,2) -- (0,0) -- (1,1);
\fill[gray!30] (1,1) -- (-1,3) -- (0,4) -- (2,2) -- (1,1);
\draw[black,line width=0.04cm] (0.5,0.5) -- (-1.5,2.5);
\draw[black,line width=0.04cm](1.5,1.5) -- (-0.5,3.5);
\draw[black,dotted,line width=0.04cm] (-1.5,2.5) -- (-2.5,3.5);
\draw[black,dotted,line width=0.04cm] (-0.5,3.5) -- (-1.5,4.5);
\draw(-1,3) node[black, anchor= west, rotate=75]{$v=v^0$};
\draw[->] (1,3) -- (3,5);
\draw(3,4.8) node[black, anchor=north]{$n$};
\draw(-1.75,2.25) node [black, anchor= west, rotate=75]{$v=v^+$};
\draw(-1.55,1.55) node[black, anchor= south, rotate=75]{$u=u^-$};
\draw(-0.25,3.75) node [black, anchor= west, rotate=75]{$v=v^-$};
\draw(0.45,3.55) node[black, anchor= north, rotate=75]{$u=u^+$};
\end{tikzpicture}
\quad
\begin{tikzpicture}[rotate=300, scale=0.95]
\draw (0,0) -- (-2,2) -- (0,4) -- (2,2) -- (0,0);
\fill[gray!70] (1,1) -- (-1,3) -- (-2,2) -- (0,0) -- (1,1);
\fill[gray!30] (1,1) -- (-1,3) -- (0,4) -- (2,2) -- (1,1);
\draw[black,line width=0.04cm] (0.5,0.5) -- (-1.5,2.5);
\draw[black,dotted,line width=0.04cm] (-1.5,2.5) -- (-2.5,3.5);
\draw[->] (1,3) -- (3,5);
\draw(3,4.8) node[black, anchor=north]{$n$};
\draw(-1,3) node [black, anchor=west, rotate=75]{$v=v^+$};
\draw(-1.55,1.55) node[black, anchor= south, rotate=75]{$u=u^-$};
\draw(-1.75,2.25) node[black, anchor=west,rotate=75]{$v=v^-$};
\draw(0.45,3.55) node[black, anchor= north, rotate =75]{$u=u^+$};
\end{tikzpicture}
\caption{A sketch of the two simple profiles for $\left(w^+ \cdot n\right) \left(w^- \cdot n\right) \leq 0$ (left) and $\left(w^+ \cdot n\right) \left(w^- \cdot n\right)>0$ with $\left|w^{-} \cdot n\right| \leq \left|w^{+} \cdot n\right|$ (right).
The black line indicates the jump set of $v$. In the dark grey area $u$ takes the value $u^-$, on the light grey area the value $u^+$.}
\label{figure: simpleprofiles}
\end{figure}
\begin{thrm}\label{theorem:sufficient}
Let $u^+,u^- \in \R$, $v^+,v^- \in \R^d$ and $n\in S^d$, then we have the following.
\begin{enumerate}
 \item
\label{theorem:sufficientit1} If $\left(w^+ \cdot n\right) \left(w^- \cdot n\right)\le 0$ 
then the optimal profile for $K$ is simple and $$K={\alpha_v} |[v]| + {\alpha_u} |[u]|\,.$$
 \item
 \label{item:theoremsufficient1d}
If $d=1$, $(w^-\cdot n)(w^+\cdot n) > 0$, $|[u]|\le \frac{2{\alpha_v}}{{\alpha_F}}$ and 
$2\left({\alpha_u} |[u]| + {\alpha_v}  |[v]|\right) |n_1| \geq {\alpha_F} |[u]|  |[v]|$
then the minimizing profile for $K$ is simple and 
\begin{equation}\label{eq:Ksimple}
K= {\alpha_F} 
\min\{|w^-\cdot n|, |w^+\cdot n|\}  |[u]| + {\alpha_v} |[v]| + {\alpha_u} |[u]|\,.
\end{equation}
\item If $d>1$, $(w^-\cdot n)(w^+\cdot n) > 0$ and $|[u]|  \le \frac{2{\alpha_v}}{d {\alpha_F}} \, \frac{ |[w] \cdot n| }{|[v]|}$
then the minimizing profile for $K$ is simple and given by \eqref{eq:Ksimple}\,.
\label{item:theoremsufficient}
\item
 \label{item:simpletok}
If the profile is simple, then 
\begin{equation*}
  K= \min_{a\in\R^d} 
\left( {\alpha_u} \left|[u]\right| +
   {\alpha_v} \left|v^+\! - \!a\right|+  {\alpha_v} \left|v^-\! -\! a\right|
 + {\alpha_F} \left|[u]\right|  \left|n \cdot (1,a)\right|\right)\,.
\end{equation*}
\end{enumerate}
\end{thrm}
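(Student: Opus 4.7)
The plan is to handle the four parts in the order (iv), (i), (ii), (iii), since (iv) provides the explicit simple-profile formula that drives the upper bounds in the other items. For (iv), I would specialize the energy \eqref{eq: energy} to profiles of the form $u(y) = U(y\cdot n)$, $v(y) = V(y\cdot n)$ with $U(\pm\tfrac12) = u^\pm$, $V(\pm\tfrac12) = v^\pm$. Since $\nabla u = U'(y\cdot n)\,n$ and $\nabla v = V'(y\cdot n)\otimes n$, the energy reduces to the one-dimensional integral
\[
\int_{-1/2}^{1/2} \bigl[{\alpha_u}|U'(t)| + {\alpha_v}|V'(t)| + {\alpha_F}|U'(t)|\,|n\cdot(1,V(t))|\bigr]\,\d t.
\]
A standard concentration argument then shows that, at the infimum, $|U'|\,\d t$ may be taken close to a Dirac mass $|[u]|\delta_{t_0}$ at a single point $t_0$, while $V$ may be chosen to pass monotonically through three plateaux $v^-$, $a:=V(t_0)$, $v^+$. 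After $W^{1,1}$-smoothing, the energy converges to ${\alpha_u}|[u]| + {\alpha_v}(|v^- - a| + |v^+ - a|) + {\alpha_F}|[u]|\,|n\cdot(1,a)|$; minimising over $a\in\R^d$ yields the formula of item (iv) and confirms that simple profiles realise it.

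For (i), the hypothesis $(w^+\cdot n)(w^-\cdot n)\le 0$ together with continuity of $a\mapsto n\cdot(1,a)$ on the segment $[v^-,v^+]$ produces some $a$ with $n\cdot(1,a)=0$. Substitution into the formula of (iv) kills the ${\alpha_F}$ contribution and collapses $|v^+ - a| + |v^- - a|$ to $|[v]|$, giving the upper bound ${\alpha_u}|[u]| + {\alpha_v}|[v]|$. The matching lower bound follows by a routine slicing argument in $Q_n$: along almost every line parallel to $n$, the one-dimensional trace of $u$ has total variation at least $|[u]|$, and similarly for $v$; integrating over the orthogonal section and discarding the non-negative ${\alpha_F}$ term returns exactly ${\alpha_u}|[u]| + {\alpha_v}|[v]|$.

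For (ii) and (iii) the upper bound \eqref{eq:Ksimple} comes directly from (iv) by inserting $a=v^+$ or $a=v^-$, whichever minimises $|n\cdot(1,a)|$; under $(w^+\cdot n)(w^-\cdot n)>0$ the function $a\mapsto n\cdot(1,a)$ has constant sign on the segment $[v^-,v^+]$, so this affine function is minimised at an endpoint. The matching lower bound is the heart of the argument. For (ii), I invoke item (ii) of Theorem \ref{theorem:bounds}, giving $K\ge \min_{N^+}\underline{K}(N^+)$, and I would minimise this function explicitly by decomposing $N^+ = \lambda n + M$ with $M\perp n$. The assumption $|[u]|\le 2{\alpha_v}/{\alpha_F}$ penalises displacement of $N^+$ away from the $n$-axis, while $2({\alpha_u}|[u]| + {\alpha_v}|[v]|)|n_1|\ge {\alpha_F}|[u]|\,|[v]|$ forbids a non-trivial two-interface splitting from being profitable, forcing the minimum to occur at $N^+\in\{0,n\}$ and reproducing \eqref{eq:Ksimple}. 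For (iii) the excerpt supplies only an upper bound in $d>1$, so the key task is to derive a matching lower bound; I would do this by slicing $Q_n$ in the direction of the projection of $[v]$ onto $n_x$ — which is exactly what makes the quantity $|[w]\cdot n|/|[v]|$ appear in the hypothesis — and applying the 1D result from item (ii) of Theorem \ref{theorem:bounds} to each slice before integrating. The principal obstacle will be verifying that the extra geometric freedom of multi-interface decompositions in higher dimensions is ruled out by the sharp numerical form of the hypothesis in (iii); this requires a careful book-keeping of how the cost of introducing intermediate velocities $v^j$ in the decomposition of Theorem \ref{theorem:bounds}(iii) compares against the possible savings in the $|w\cdot\nabla u|$ term, and it is here that the delicate balance encoded in the condition on $|[u]|$ must be exploited.
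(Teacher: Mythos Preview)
Your treatment of (iv), (i), and (ii) is essentially the paper's approach: the one-dimensional reduction and concentration for (iv), the choice of $a\in[v^-,v^+]$ with $(1,a)\cdot n=0$ for the upper bound in (i) together with the trivial slicing/convexity lower bound, and for (ii) the use of the lower bound $K\ge\min_{N^+}\underline K(N^+)$ from Theorem~\ref{theorem:bounds}\ref{item:1dbounds} followed by showing the minimum is at $N^+\in\{0,n\}$. One small clarification: the hypothesis $|[u]|\le 2{\alpha_v}/{\alpha_F}$ is not what penalises $N^+$ off the $n$-axis; rather, it is the precondition for the lower bound $\underline K$ to be valid at all (it enters through a truncation argument on $v$), while the second hypothesis is what forces the minimiser to an endpoint.

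Part (iii), however, has a genuine gap. Your plan is to slice $Q_n$ and apply the one-dimensional lower bound of Theorem~\ref{theorem:bounds}\ref{item:1dbounds} on each slice. This cannot work: that one-dimensional lower bound relies essentially on a truncation of $v$ into the interval $[v^-,v^+]$, which is only available for \emph{scalar} $v$; on any slice of $Q_n$ the velocity remains $\R^d$-valued, and projecting it onto a line would undercount the ${\alpha_v}|\nabla v|$ term while failing to control the full ${\alpha_F}|w\cdot\nabla u|$ term. The paper explicitly notes that the truncation step does not extend beyond $d=1$. Instead, the paper obtains the lower bound for (iii) by a different mechanism: for $(u,v)\in\mathcal A^C$ it introduces the matrix $W=\int_{J_u} v\otimes\nu\,\d\mathcal H^d$ and shows
\[
E[u,v,Q_n]\ \ge\ {\alpha_F}|[u]|\,n_0 + {\alpha_u}|[u]| + f(W),\qquad
f(W)={\alpha_F}|[u]|\operatorname{tr}W' + {\alpha_v}\bigl(|v^-\!\otimes n-W|+|v^+\!\otimes n-W|\bigr),
\]
where $W'$ denotes the spatial block. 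Since $f$ is convex, one checks via a subdifferential computation that $0\in\partial f(v^-\otimes n)$ precisely under the hypothesis $|[u]|\le \frac{2{\alpha_v}}{d{\alpha_F}}\frac{|[w]\cdot n|}{|[v]|}$ (after arranging $0<w^-\cdot n\le w^+\cdot n$), which yields $f(W)\ge f(v^-\otimes n)$ and hence the claimed lower bound. This convex-analysis argument on the aggregated quantity $W$ is the missing idea in your proposal.
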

The proof is given in  Section \ref{sec:thmproofs}. An example where the profile is not simple is discussed 
in Section \ref{sec:consequences} below.

\section{Consequences for the motion estimation}\label{sec:consequences} 
In this section  we  discuss the practical implications of Theorem \ref{theorem:sufficient}  
on the actual optical flow estimation based on the proposed variational approach for the joint image sequence restoration and motion extraction. 
We will analyze implicit conditions on the approximate limits of $u$ and $v$ at jump sets under which the singular energy density $K$ defined in 
\eqref{eq:definitionK} is simple and there is no relevant microscopic scale arising in the variational model.  
Furthermore, we will give an example where microstructures actually appear and thus the singular energy density $K$ is not simple.
As the only interesting cases appear where $[u] \neq 0$, in this section we always assume that $J_v \subset J_u$.

We start observing  that the condition for simple profiles in  one space dimension in Theorem \ref{theorem:sufficient} (ii) includes in 
particular the case
\beqn\label{eq:simplecondition}
\left|[u]\right| \leq 2 \frac{{\alpha_v}}{{\alpha_F}} \left|n_1\right|\,,
\eeqn
as well as  the case
\beqn\label{eq:simplecondition2}
 \left|[v]\right| \leq 2 \frac{{\alpha_u}}{{\alpha_F}} \left|n_1\right| \text{ and } \left|[u]\right| \leq 2 \frac{{\alpha_v}}{{\alpha_F}}\,.
\eeqn
Hence, for moderate speed of the intensity interface $J_u$  (associated with large $n_1$) 
and either moderate difference $|[v]|$
of the estimated motion on both sides of the interface or moderate intensity variation $|[u]|$ the condition
in Theorem \ref{theorem:sufficient} (ii) is fulfilled
(in particular for ($|n_1| > \frac12$ and $\frac{{\alpha_v}}{{2\alpha_F}} \geq \|u\|_{L^\infty}$) or ($\frac{{\alpha_v}}{{\alpha_F}} \geq \|u\|_{L^\infty}$ and $|n_1| \geq \frac{M \alpha_F}{{\alpha_u}}$)). For the case $d>1$ the condition in Theorem \ref{theorem:sufficient} (iii) can be rephrased as 
$$
|[v] \cdot (n_1,\ldots, n_d)| \geq \frac{d {\alpha_F}}{2{\alpha_v}} |[v]|\, |[u]|\,.
$$
Thus, 
if  $\frac{{\alpha_F}}{{\alpha_v}}$ is small,
this condition is fulfilled
 for a moderate speed of the intensity interface $J_u$ (associated with large spatial component of $n$) and for 
a direction of the jump $\frac{[v]}{|[v]|}$ with a significant component 
 pointing in direction of the spatial interface normal 
$\frac{(n_1,\ldots, n_d)}{\|(n_1,\ldots, n_d)\|}$. 

In the practical application the velocities on the two sides of the discontinuity  $v^+$ and $v^-$ are determined within the variational  setting
by shading or texture information on both sides of
the edge set $J_u$. Then, the singular energy density $K(u^{+}, u^{-},v^{+},v^{-}, n)$ is associated with the proper identification of the type of object motion as described for the simple model problem in Section \ref{sec:intro}, 
in particular the decision on foreground or background and the identification of additional 
erosion or dilation. As we will see below, for $K$ simple 
the minimization of the joint functional is able to decide on the motion pattern. 
In the case that $K$ is not simple and that microstructures appear the 
variational model seems not to appropriately reflect the scope of possible motion patterns. 
On the other hand, under the reasonable implicit assumption that the data $u^{+},\,  u^{-},\, v^{+},\, v^{-}$ fulfills one of the 
conditions under which the singular energy density $K$ is simple the joint variational approach actually renders the 
coupled restoration and motion estimation problem meaningful including the proper identification of the local motion pattern at object edges.

In what follows we study the different local motion pattern at some point $y \in J_u$ and the associated
singular energy density $K$ in more detail. 
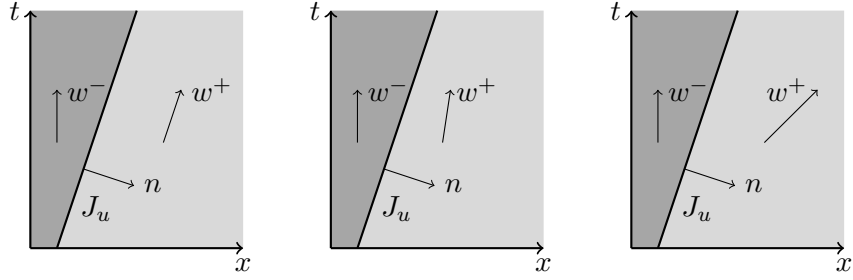
\begin{figure}
 \centering
\begin{tikzpicture}[scale = 0.7]

\fill[gray!70] (0,0) -- (0,4.5) -- (2,4.5) -- (0.5,0) -- (0,0);
\fill[gray!30] (0.5,0) -- (2,4.5) -- (4,4.5) -- (4,0) -- (0.5,0);

\draw[thick,->] (0,0) -- (4,0);
\draw[thick,->] (0,0) -- (0,4.5);

\draw (4,0) node[anchor=north] {$x$};
\draw (0,4.5) node[anchor=east] {$t$};

\draw[thick] (0.5,0) -- (2,4.5);
\draw[->] (1,1.5) -- (1+3/sqrt{10}, 1.5 - 1/sqrt{10});

\draw (1+3/sqrt{10}, 1.5 - 1/sqrt{10}) node[anchor=west] {$n$};
\draw (0.75,0.75) node[anchor=west] {$J_u$};

\draw[->] (0.5,2) -- (0.5,3);
\draw[->] (2.5,2) -- (2.5 + 1/3,3);

\draw (0.5,3) node[anchor = west] {$w^-$};
\draw (2.5+1/3,3) node[anchor = west] {$w^+$};
\end{tikzpicture}
\quad
\begin{tikzpicture}[scale = 0.7]

\fill[gray!70] (0,0) -- (0,4.5) -- (2,4.5) -- (0.5,0) -- (0,0);
\fill[gray!30] (0.5,0) -- (2,4.5) -- (4,4.5) -- (4,0) -- (0.5,0);

\draw[thick,->] (0,0) -- (4,0);
\draw[thick,->] (0,0) -- (0,4.5);

\draw (4,0) node[anchor=north] {$x$};
\draw (0,4.5) node[anchor=east] {$t$};

\draw[thick] (0.5,0) -- (2,4.5);
\draw[->] (1,1.5) -- (1+3/sqrt{10}, 1.5 - 1/sqrt{10});

\draw (1+3/sqrt{10}, 1.5 - 1/sqrt{10}) node[anchor=west] {$n$};
\draw (0.75,0.75) node[anchor=west] {$J_u$};

\draw[->] (0.5,2) -- (0.5,3);
\draw[->] (2.1,2) -- (2.25,3);

\draw (0.5,3) node[anchor = west] {$w^-$};
\draw (2.5-1/3,3) node[anchor = west] {$w^+$};
\end{tikzpicture}
\quad
\begin{tikzpicture}[scale = 0.7]

\fill[gray!70] (0,0) -- (0,4.5) -- (2,4.5) -- (0.5,0) -- (0,0);
\fill[gray!30] (0.5,0) -- (2,4.5) -- (4,4.5) -- (4,0) -- (0.5,0);

\draw[thick,->] (0,0) -- (4,0);
\draw[thick,->] (0,0) -- (0,4.5);

\draw (4,0) node[anchor=north] {$x$};
\draw (0,4.5) node[anchor=east] {$t$};

\draw[thick] (0.5,0) -- (2,4.5);
\draw[->] (1,1.5) -- (1+3/sqrt{10}, 1.5 - 1/sqrt{10});

\draw (1+3/sqrt{10}, 1.5 - 1/sqrt{10}) node[anchor=west] {$n$};
\draw (0.75,0.75) node[anchor=west] {$J_u$};

\draw[->] (0.5,2) -- (0.5,3);
\draw[->] (2.5,2) -- (2.5 + 1,3);

\draw (0.5,3) node[anchor = west] {$w^-$};
\draw (2.5+1,3) node[anchor = east] {$w^+$};
\end{tikzpicture}
\caption{1D Sketch of different configurations at the interface $J_u$ of a moving object $(u=u^+)$ with motion velocity $v^+$ on a background with motion field $v^-$ and $w^{\pm} = \left(1,v^{\pm}\right)$. 
On the left the interface motion is consistent with the object motion, in the middle the object is eroding and on the right the object is dilating.}
\label{fig:cases}
\end{figure}

\subsubsection*{Consistent interface motion.} Let us suppose that a light object with image intensity $u^+$ is moving with a  speed $v^+$ on a dark background with speed $v^-$ and image intensity $u^-$ ($u^+ > u^-$) (cf. left sketch in Fig. \ref{fig:cases}). 
The consistency of the interface motion of the space time edge set $J_u$ with the object motion 
is expressed in terms of the brightness constancy assumption $w^+ \cdot n =0$ at the point $y\in J_u$ with $n$ denoting the space time normal on $J_u$,
i.e.~$D u = [u] \otimes n \, \mathcal{H}^{d}\LL J_u$. In this case Theorem \ref{theorem:sufficient} (i)
applies and $K={\alpha_v} |[v]| + {\alpha_u} |[u]|$. Microscopically this is realized by the profile sketched in Fig. \ref{figure: simpleprofiles} (left) with $v^0 = v^+$. Due to noise the brightness constancy assumptions might only be approximately fulfilled with $|w^+ \cdot n| \ll 1$. Then, Theorem \ref{theorem:sufficient} (ii) shows that we obtain the corresponding approximate singular energy density $K= {\alpha_F} |w^+ \cdot n| |[u]| + {\alpha_v} |[v]| + {\alpha_u} |[u]|$.

\subsubsection*{Non consistent interface motion.} Let us suppose that in the same configuration  $w^-\cdot n<w^+\cdot n<0$ (cf. middle sketch in Fig. \ref{fig:cases}). Hence, neither the object motion $v^+$ nor the motion $v^-$ (currently classified as background) is consistent with the motion of the interface $J_u$. Indeed, we observe an erosion of the interface. Let us assume that $v$ is the actual speed of the interface
with $w\cdot n=0$ for $w=(1,v)$, then $v-v^+$ is the effective erosion velocity and 
$|(0,v-v^+)\cdot n| = |w^+\cdot n|$ is the associated footprint in the singular energy density $K$, which is in agreement with the findings of Theorem \ref{theorem:sufficient} (ii). Obviously, the energy functional considers this classification as favorable compared to the one obtained via flipping object and background and classifying 
a foreground object with intensity $u^-$ moving with speed $v^-$ and dilation speed $v^- -v$ and larger footprint in the singular energy density $|w^-\cdot n|$.
Microscopically this is realized by the profile sketched in Fig. \ref{figure: simpleprofiles} (right). 
If  $w^-\cdot n < 0 <  w^+\cdot n$
(cf.~right sketch in Fig. \ref{fig:cases}) the variational approach favors the classification of a dilation process on the interface $J_u$ with velocity $\bar v -v^+$
for the velocity $\bar v$ on the line segment $[v^-,v^+]$ with $\bar w \cdot n = (1,\bar v) \cdot n =0$ and thus smallest 
possible singular energy density  $K= {\alpha_v} |[v]| + {\alpha_u} |[u]|$. The associated microscopic solution profile coincides with that sketched in  Fig.~\ref{figure: simpleprofiles} (left).

\subsubsection*{Objects and background with constant intensity.}
\begin{wrapfigure}{r}{0.5\linewidth}
\begin{tikzpicture}[scale= 1.7, font=\tiny]
\fill[gray!10]

(0,0) -- (0,3) -- (3,3) -- (3,0) -- (0,0); 

\fill[gray!50] (1.5,1.75) circle(0.6);
\path[draw=red] (1.5,1.75) circle[radius=0.6];
\draw[red, ->] (1.5,1.75) -- (1.8,2.05);
\draw (1.65,1.9) node[anchor = south, color=red] {$v_1$};
\draw (1.2,1.9) node{$O_1$};

\fill[gray!30] (1.75,0.8) circle (0.5);
\path[draw=green!50!black] (1.75,0.8) circle[radius=0.5];
\draw[green!50!black, ->] (1.75,0.8) -- (2.2,0.65);
\draw (1.975,0.725) node[anchor = south, color=green!50!black] {$v_2$};
\draw (1.75, 1.1) node{$O_2$};

\fill[gray!70] (1,1) circle (0.5);
\path[draw=blue] (1,1) circle[radius=0.5];
\draw[blue, ->] (1,1) -- (1,1.4);
\draw (1,1.2) node[anchor = east, color=blue] {$v_3$};
\draw (1.25,0.8) node{$O_3$};

\end{tikzpicture} 
\vspace{-10pt}
\caption{Overlaying motion.}
\label{figure:multipleobjects}
\end{wrapfigure}
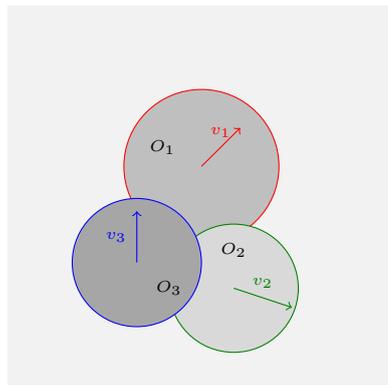
Objects and background with constant intensity.
If there is no shading or texture information, the estimation of motion velocities can solely be based on the observed 
motion of interfaces. Examplarily, let us suppose that objects $\object_1, \ldots, \object_m$ are moving with constant velocities $v_1, \ldots, v_m$ in front of a immobile background as sketched in Figure \ref{figure:multipleobjects}. 
Furthermore, let us explicitly rule out interface dilation and erosion.
For such a configuration the singular energy density compares the different foreground and background configurations
in the local depth ordering. 
For a fixed depth ordering of the objects the estimation of a velocity $v_i$ for each object is based on the minimization of $\int_{\partial \object_{i}^{vis}} K \d \mathcal{H}^d$ with 
\begin{equation*}
K= {\alpha_F} |(1,v_i)\cdot n| + {\alpha_v} |v_i - v_i^{opp}| +  {\alpha_u} |u_i - u_i^{opp}| \,, 
\end{equation*}
where ${\partial \object_{i}^{vis}}$ is the visible part of $\partial \object_{i}$ and  $v_i^{opp}$, $u_i^{opp}$
are the velocity and the intensity opposite $\partial \object_{i}$, respectively. 
The estimation of the depth ordering via the variational approach is then performed by a comparison of the minimal total energy obtained for the set 
of all possible depth configurations (In Figure \ref{figure:multipleobjects} the color of the interfaces shows which is the relevant velocity of the singular energy density for the minimal total energy).
An estimation of the background speed is obviously not possible.

\subsubsection*{Appearance of microstructures.} 
By the above discussion 
 it is clear that we have to deal with 
large relative velocities and jumps in the intensities.  The key idea is to find a case where the 
upper bound in   Theorem \ref{theorem:bounds}\ref{item:1dbounds} 
and \ref{item:ddbounds} 
is lower than the energy of simple profiles given in Theorem \ref{theorem:sufficient}\ref{item:simpletok}.

We discuss first the case $d=1$. Let $n \in S^1$ and let $\overbar{K}$ be the upper bound given in  Theorem \ref{theorem:bounds}\ref{item:1dbounds}. We assume that
\begin{equation*}
|[u]|\le \frac{2{\alpha_v}}{{\alpha_F}} \hskip3mm\text{ and }\hskip3mm
0< w^-\cdot n <w^+\cdot n\,.  
\end{equation*}
Then the energy of a simple profile in  Theorem \ref{theorem:sufficient}\ref{item:simpletok}
coincides with $\overline K(0)$.
Indeed, if $a$ is between $v^-$ and $v^+$ then $|v^+-a|+|v^--a|\ge |v^+-v^-|$ and $|n\cdot (1,a)|\ge |n\cdot w^-|$; if $a$ is outside that interval then,
letting $b$ be the projection of $a$ on the interval,
$|v^+-a|+|v^--a|=|v^+-v^-|+2|a-b|$ and $|n\cdot (1,a)|\ge |n\cdot b|-|b-a|$.
This proves that the minimum is attained at $a=v^-$.

Therefore it suffices to construct a
situation where $0$ is not a minimizer of the upper bound function $\overline K$. For a vector $N\in\R^2$ chosen later and a small $\delta\in (0,1)$ we compute,
assuming $n\cdot w^-\ne 0$ and writing for brevity
 $\zeta = \frac{{\alpha_F} \left|[u]\right|}{{\alpha_u} \left|[u]\right| + {\alpha_v}\left|[v]\right|}$,
\begin{alignat*}1
  \frac{\overline K(\delta N)-\overline K(0)}
{\alpha_u \left|[u]\right| + {\alpha_v}\left|[v]\right|}
&=|\delta N| + |n-\delta N|-1
 + \zeta\,  \left( |\delta N \cdot w^+| + | (n-\delta N) \cdot w^-|
-| n \cdot w^-|
 \right) \\
&  =A(N) \delta  + O(\delta^2)
\end{alignat*}
where
\begin{equation*}
  A(N)=|N|-n\cdot N + \zeta (|N\cdot w^+|-N\cdot w^-).
\end{equation*}
Therefore it suffices to show that $N\in \R^2$ can be chosen so that $A(N)<0$.

We choose a large velocity $v\ge 1$, set 
 ${\alpha_F}={\alpha_v}={\alpha_u}=1$, $v^\pm=\pm v$, $u^+=1, u^- = 0$,
so that $[u]=1 \leq 2\alpha_v/\alpha_F$, $\zeta=1/(1+2v)$, $w^\pm=(1,\pm v)$.  
We choose 
a normal which corresponds to motion with a velocity
close to but not identical with the velocity $v^-$. 
Precisely, for some $\tilde v>v$ 
we set $n=(\tilde v,1)/\sqrt{\tilde v^2+1}$.
Then it is easy to see that $0<n\cdot w^-<n\cdot w^+$.
Finally  we set $N=(v,-1)$, so that $N\cdot w^+=0$. 
It remains to show that $v$ and $\tilde v$ can be chosen so that $A(N)<0$. To do this we compute
\begin{equation*}
A(N)=  |N|-n\cdot N + \zeta (|N\cdot w^+|-N\cdot w^-)
  =\sqrt{v^2+1} -\frac{v\tilde v-1}{\sqrt{\tilde v^2+1}} - \frac{2v}{1+2v}
  \end{equation*}
which is negative if $v$ and $\tilde v$ are chosen sufficiently large (cf. Fig. \ref{figure:slidingmotion}).
A detailed computation shows that $v=2$ and any $\tilde v \geq 4$ will do.
In the case $v=\tilde v$, however, the Taylor series above is not admissible (since $w^-\cdot n=0$) and the profile becomes simple again, in agreement with Theorem 
\ref{theorem:sufficient}\ref{theorem:sufficientit1}.

 \begin{figure}[t]
\centering

\begin{tikzpicture}[scale = 0.7]

\fill[gray!70] (0,0) -- (0,2.5) -- (4,1.5) -- (4,0) -- (0,0);
\fill[gray!30] (0,2.5) -- (0,4.5) -- (4,4.5) -- (4,1.5) -- (0,2.5);

\draw[thick,->] (0,0) -- (4,0);
\draw[thick,->] (0,0) -- (0,4.5);

\draw (4,0) node[anchor=north] {$x$};
\draw (0,4.5) node[anchor=east] {$t$};

\draw[thick] (0,2.5) -- (4,1.5);

\draw (2.4,1.9) -- (2.5,2.3) -- (1.7,2.5) -- (1.5, 1.7) -- (2.3, 1.5) -- (2.4,1.9);

\draw[->] (2.1,2.4) -- (2.3,3.2);

\draw (2.3,3.2) node[anchor=west] {$n$};

\end{tikzpicture}
\quad
\quad
\begin{tikzpicture}[scale = 0.7]
\draw (0,0) -- (1,4) -- (-3,5) -- (-4,1) -- (0,0);
\draw[->] (-1,4.5) -- (0,8.5);

\draw (0,8.5) node[anchor=west] {$n$};

\draw[thick]  (-3.5,3) -- (-0.5, 3/2) -- (0.5,2);

\fill[gray!70] (0,0) -- (-4,1) -- (-3.5,3) -- (-0.5, 3/2) -- (0.5,2) -- (0,0);
\fill[gray!30] (0.5,2) -- (1,4) -- (-3,5) -- (-3.5,3) -- (-0.5, 3/2) -- (0.5,2);

\draw[->] (-3/2,5/2) -- (-6/2,13/4);
\draw[->] (-3/2,5/2) -- (0,13/4);

\draw (-6/2,13/4) node[font=\small, anchor=south west] {$\frac14 w^-$};
\draw (0,13/4) node[font=\small, anchor=south] {$\frac14 w^+$};
\end{tikzpicture}
 \caption{Sketch of a situation where microstructures develop. On the left the 1D sketch of a moving object with relatively high velocity in space-time.
 On the right a microstructure profile that has a lower energy than a simple profile.}
 \label{figure:slidingmotion} 
\end{figure}
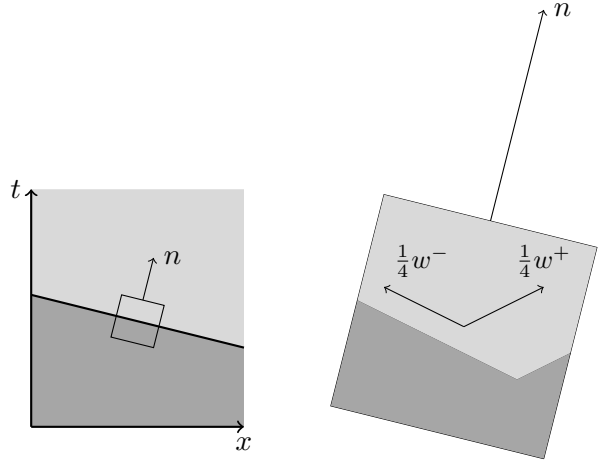

The construction can be easily generalized to the case $d>1$.
From the lower bound in   Theorem \ref{theorem:sufficient}\ref{item:simpletok}
we know that the best interfacial energy which can be attained using simple profiles is given by 
\begin{equation*}
 K_s= {\alpha_u} \left|[u]\right| +
   {\alpha_v} \left|v^+- a\right|+  {\alpha_v} \left|v^--a\right|
 + {\alpha_F} \left|[u]\right|  \left|n\cdot (1,a)\right|
\end{equation*}
for some $a\in\R^d$.
We choose  as above a large velocity $v\ge 1$, set 
 ${\alpha_F}={\alpha_v}={\alpha_u}=1$, $v^\pm=\pm v e_1$, $w^{\pm} = \left(1,v^{\pm}\right)$, $u^\pm=\pm1$, and 
pick a normal which corresponds to motion with velocity $-\tilde v e_1$, namely,
$n=(\tilde v,1, 0, \dots, 0)/\sqrt{1+\tilde v^2}$, for some $\tilde v>v$. 
The optimal $a$ is also parallel to $e_1$, and a short computation shows that it equals $v^-$, so that
\begin{equation*}
  K_s =  {\alpha_u} \left|[u]\right| +
   {\alpha_v} \left|v^-- v^+\right|+\alpha_F |[u]| |n\cdot w^-|\,.
\end{equation*}
The one-dimensional result shows that there is $N^+\in\R^2$ such that
$\overline K(N^+)<K_s$. This result can be immediately embedded in the higher-
dimensional setting by taking $l=2$, $N^1=(N^+,0,\dots, 0)$, $N^2=n-N^1$, 
$v^1=v^+$, $v^2=v^-$ in the upper bound of Theorem \ref{theorem:bounds}\ref{item:ddbounds}.  Therefore the same values, $v=2$ and $\tilde v\geq 4$, will do.

We refer to Section \ref{sec:illustration} for a visualization of a more general microscopic pattern in the case $d=2$.

\subsubsection*{Potential impact on the numerical implementation of the motion estimation model.}
The model studied here has a built--in consistency with respect to the local shading or texture information \emph{and} to the global geometry $J_u$
of moving and deforming objects in space--time, and is therefore attractive for 
concrete applications to imaging, based on an appropriate numerical implementation. 
The possible appearance of microstructures is, however, a very problematic feature of the model.
If microstructure appears the usability of the model in imaging is questionable. 

The analysis presented in this paper shows that microscopic patterns appear only if the  motion velocity $v$ encoded in the edge set $J_u$ is 
relatively large and the motion data  $v^+$ and $v^-$ encoded in the shading or the texture on both sides of the edge are substantially inconsistent  with $v$. 
Thus, in imaging applications the criteria for simple profiles stated in Theorem \ref{theorem:sufficient} and discussed at the beginning of this section
mostly rule out the appearance of microstructures and indicate that a one-scale method should be appropriate in normal situations.
The criteria  in Theorem \ref{theorem:sufficient} can be used in a numerical algorithm as an a-posteriori test for the 
appropriateness of the one-scale model based on upscaled versions of $u^+$, $u^-$, $v^+$, $v^-$, and $n$.  
If this test fails, it may be advisable to modify the the parameters $\alpha_u$, $\alpha_v$ and $\alpha_F$,
in ways which are suggested by the conditions in Theorem \ref{theorem:sufficient}.
Failure of the test, if one does not appropriately modify the parameters of the problem, 
leads to microscopic patterns. In particular,  one would expect to observe oscillations
in the single--scale numerical approximation of $u$ and $v$ in the vicinity of the edge set $J_u$, with a length
scale given by the spatial discretization.
In fact, the constructions used in the proofs of the upper bounds could be used to set up
a reduced microscopic model in a two--scale discretization approach, which would include 
a local optimization over a small set of  parameters describing the microstructure.
Numerical two-scale methods of this type are of theoretical interest, but in this concrete application they are of no practical relevance.

For a practical implementation of the single--scale model it is useful to observe that the functional $F$ is not jointly convex in $u$ and $v$. 
However, $F$ is separately convex in $u$ and in $v$ and thus classical primal-dual methods can be applied in an alternating minimization scheme.

\section{Proofs of the main results} \label{sec:proofs}
In this Section  we prove Theorem \ref{theorem:bounds} and Theorem \ref{theorem:sufficient}. We start by discussing the symmetries of  the function $K$ and 
reformulating  its definition on a different space of test functions 
in  Section \ref{sec:prelim}.
Moreover, we prove classical subadditivity and convexity properties for $K$.
Then, in Section \ref{sec:upper} we discuss explicit constructions which lead to upper bounds of $K$ for $d=1$
 and prove a lower bound of $K$ for $d=1$.
 Based on the ingredients of  Section \ref{sec:prelim} the proofs of the theorems are then 
given in Section \ref{sec:thmproofs}.
Finally, in Section \ref{sec:illustration} we illustrate the possible microscopic patterns which might arise in critical regimes complementing the discussion in Section \ref{sec:consequences}.
\subsection{Preliminaries}\label{sec:prelim}
    
In order to simplify the following discussion we first list  the symmetries
which $K$ obeys.
\begin{lmm}\label{lemma: invariance}
$K$ has the following symmetries:
\begin{enumerate}
\item[] $K\left(u^{+},u^{-},v^{+},v^{-},n\right) = K\left(u^{+}-u^{-}, 0 , v^{+}, v^{-}, n\right)$, \\[-2ex]
\item[] $K\left(u^{+},u^{-},v^{+},v^{-},n\right) = K\left(-u^{+},-u^{-},v^{+},v^{-},n\right)$,\\[-2ex]
\item[] $K\left(u^{+},u^{-},v^{+},v^{-},n\right) = K\left(u^{-},u^{+},v^{-},v^{+},n\right)$,\\[-2ex]
\item[] $K\left(u^{+},u^{-},v^{+},v^{-},n\right) = K\left(u^{-},u^{+},v^{-},v^{+},-n\right)$.
\end{enumerate}
\end{lmm}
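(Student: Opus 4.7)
The plan is to exhibit, for each symmetry, an explicit bijection of the admissible set $\mathcal{A}$ that preserves the energy $E[\cdot,\cdot,Q_n]$ and maps the trace conditions of the left-hand side onto those of the right-hand side. Since $K$ is defined as an infimum over $\mathcal{A}$, producing such an energy-preserving bijection between the two admissible sets immediately gives the required equality. In each case the key observation is that the integrand $g(v,\nabla u,\nabla v)=\alpha_F|w\cdot\nabla u|+\alpha_v|\nabla v|+\alpha_u|\nabla u|$ is independent of the value of $u$ and is even in its second argument, and that $Q_n$ is invariant under the reflection $y\mapsto -y$.

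For identity (i) I would use the vertical shift $(u,v)\mapsto(u-u^-,v)$. The energy is unchanged because $g$ depends on $u$ only through $\nabla u$, the traces become $u^+-u^-$ and $0$ on the two normal faces, and the $m^i$-periodicity of $v$ is preserved. For identity (ii) the map $(u,v)\mapsto(-u,v)$ sends $\mathcal{A}(u^+,u^-,v^+,v^-,n)$ to $\mathcal{A}(-u^+,-u^-,v^+,v^-,n)$, and the energy is invariant because $g$ is even in its second argument (the absolute values $|\nabla u|$ and $|w\cdot\nabla u|$ are insensitive to the sign of $\nabla u$).

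For identity (iii) I would use the point reflection $(u,v)(y)\mapsto(\tilde u,\tilde v)(y):=(u(-y),v(-y))$. The set $Q_n$ is invariant under $y\mapsto -y$, and the faces $\{y\cdot n=\pm 1/2\}$ are swapped, so the pair $(\tilde u,\tilde v)$ satisfies the trace conditions with $u^\pm,v^\pm$ interchanged; the $m^i$-periodicity is preserved after relabelling $m^i\mapsto -m^i$ (still an orthonormal basis of $\R^{d+1}$). Under the chain rule $\nabla\tilde u(y)=-\nabla u(-y)$ and $\nabla\tilde v(y)=-\nabla v(-y)$, so $|\nabla\tilde u|$ and $|\nabla\tilde v|$ as well as $|\tilde w\cdot\nabla\tilde u|=|w(-y)\cdot\nabla u(-y)|$ are preserved, and the energy, being an integral over $Q_n$, is unchanged after the change of variables $y\mapsto -y$.

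Identity (iv) is almost a tautology once one notes that $Q_{-n}$ and $Q_n$ coincide as subsets of $\R^{d+1}$: choosing the orthonormal basis $\{-n,m^1,\dots,m^d\}$ instead of $\{n,m^1,\dots,m^d\}$ gives the same cube, but now the face $\{y\cdot n=-1/2\}=\{y\cdot(-n)=1/2\}$ is the new $+$-face. The identity map on $(u,v)$ therefore sends $\mathcal{A}(u^+,u^-,v^+,v^-,n)$ onto $\mathcal{A}(u^-,u^+,v^-,v^+,-n)$ without altering the energy. No substantive analytic difficulty is expected; the only thing to track carefully is, in each of the four cases, which face of $Q_n$ carries which trace after the transformation.
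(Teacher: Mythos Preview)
Your proposal is correct and follows exactly the same approach as the paper: shift $u\mapsto u-u^-$ for (i), reflect $u\mapsto -u$ for (ii), use the point reflection $(u,v)(y)\mapsto(u(-y),v(-y))$ for (iii), and note that (iv) is just a relabelling of the boundary conditions. The paper states this in one line per item; your version simply spells out the bijections and the invariance of $g$ in more detail.
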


\begin{proof} 
They all follow immediately from the definition. 
For the first equality replace $u$ by  $u - u^-$. 
For the second item replace $u$ by $- u$. 
For the third equality replace $u$ and $v$ by $u(-y)$ and $v(-y)$.
The last invariance is simply a relabeling of the boundary conditions.
\end{proof}

One key observation is that we can  replace the set of functions in the
definition of $K$ by a simpler class of functions.  In particular, the
function $u$ can be assumed to take only two values, and the velocity field
$v$ can be assumed to be smooth. This allows to give a classical sense to the
term $w\cdot Du$.
We stress that existence of minimizers is not expected in this restricted class,
indeed functions of this class are later  interpreted as microstructures.
 
\begin{lmm}\label{lem:coarea}
In the definition of $K$ in (\ref{eq:definitionK}) the set $\mathcal{A}$ can be replaced by 
\begin{align*}
\mathcal{A}^C= \Bigg\{& (u,v) \in BV\left(Q_n;\left\{u^+,u^-\right\}\right) \times C^{\infty}\left(\overline Q_n;\R^{d}\right): u = u^{\pm} \text{ and } v = v^{\pm} \text{ on } \\
&\partial Q_{n} \cap \left\{ y \cdot n = \pm \frac{1}{2} \right\} \text{ and periodic in the } m^i\text{-directions} \Bigg\}.
\end{align*}
\end{lmm}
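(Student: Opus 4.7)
The plan is to prove the identity $K = K_C$, where $K_C := \inf_{\mathcal{A}^C} E[u, v, Q_n]$ with the energy on $\mathcal{A}^C$ interpreted as $\int_{Q_n} g(v, dDu) + \alpha_v \int_{Q_n} |\nabla v| \d y$; this extension is meaningful because $g(v(y), \cdot)$ is convex and positively one-homogeneous and $v \in C^\infty(\overline{Q_n})$ is continuous. I will handle the two inequalities $K \leq K_C$ and $K \geq K_C$ separately, and the common technical tool is a boundary-preserving regularisation, since a naive convolution would destroy the prescribed traces on the faces $\{y \cdot n = \pm \tfrac12\}$.

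For $K \leq K_C$, I start from $(u, v) \in \mathcal{A}^C$ and build an approximating sequence in $\mathcal{A}$. Since $v$ is already smooth, only $u$ must be regularised. To preserve the traces $u^\pm$, I first squeeze $u$ into a thinner strip $\{|y \cdot n| \leq (1 - \delta)/2\}$ by an affine rescaling in the $n$-direction and fill the two remaining layers of width $\delta/2$ near the top and bottom faces with the constants $u^\pm$. Then I extend periodically in the $m^i$-directions and convolve with a nonnegative smooth kernel $\rho_\epsilon$ of radius smaller than $\delta/4$. The result $u_{\delta, \epsilon}$ lies in $W^{1,1}(Q_n)$, is periodic, and attains $u^\pm$ on the prescribed faces; hence $(u_{\delta, \epsilon}, v) \in \mathcal{A}$. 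Writing $\nabla u_{\delta, \epsilon} = \rho_\epsilon * D \hat u_\delta$, with $\hat u_\delta$ the rescaled-and-extended version of $u$, and applying Jensen's inequality to the convex one-homogeneous function $g(v(y), \cdot)$ gives
\beq
\int_{Q_n} g(v, \nabla u_{\delta, \epsilon}) \d y \leq \int_{Q_n} \int \rho_\epsilon(y - z) \, g(v(y), \nu(z)) \d y \, d |D \hat u_\delta|(z),
\eeq
with $\nu = dD \hat u_\delta / d|D \hat u_\delta|$. Sending first $\epsilon \to 0$, continuity of $v$ yields the bound $\int g(v, dD \hat u_\delta)$, and sending $\delta \to 0$ recovers $\int g(v, dDu)$. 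Because the $\alpha_v \int |\nabla v| \d y$ contribution is unaffected, taking the infimum yields $K \leq K_C$.

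For $K \geq K_C$, I start from $(u, v) \in \mathcal{A}$ of finite energy and produce an approximating sequence in $\mathcal{A}^C$. First I smooth $v$ by the same rescale-and-mollify device with a nonnegative kernel, which preserves $\|v_\delta\|_\infty \leq M$ and the traces $v^\pm$; since $v \in W^{1,1} \cap L^\infty$, this yields $v_\delta \to v$ in $W^{1,1}$ and almost everywhere, and the uniform bound together with $|\nabla u| \in L^1$ gives $\int |w_\delta \cdot \nabla u| \d y \to \int |w \cdot \nabla u| \d y$ by dominated convergence. Next I truncate $u$ pointwise to the interval $[\min\{u^+, u^-\}, \max\{u^+, u^-\}]$, which can only decrease $|\nabla u|$ and $|w_\delta \cdot \nabla u|$ at each point. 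The BV coarea formula for the convex one-homogeneous integrand $g(v_\delta(y), \cdot)$ then reads
\beq
\int_{Q_n} g(v_\delta, \nabla u) \d y = \int_{u^-}^{u^+}\! \int_{\partial^* \{u > s\} \cap Q_n} g(v_\delta(y), \nu_s(y)) \, d \mathcal{H}^d \d s,
\eeq
so a mean-value argument supplies $s^\star \in (u^-, u^+)$ for which the inner integral is at most $(u^+ - u^-)^{-1}$ times the left-hand side. Setting $\tilde u := u^- + (u^+ - u^-) \chi_{\{u > s^\star\}}$ then gives $\int g(v_\delta, dD \tilde u) \leq \int g(v_\delta, \nabla u) \d y$. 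The $W^{1,1}$-traces of $u$ force the trace of $\chi_{\{u > s^\star\}}$ to be $1$ on $\{y \cdot n = \tfrac12\}$ and $0$ on $\{y \cdot n = -\tfrac12\}$ for every $s^\star \in (u^-, u^+)$, so $\tilde u$ inherits the prescribed boundary data and periodicity. Hence $(\tilde u, v_\delta) \in \mathcal{A}^C$ and $K_C \leq E[\tilde u, v_\delta, Q_n] \leq E[u, v_\delta, Q_n] \to E[u, v, Q_n]$.

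The main technical obstacle in both directions is the boundary-preserving regularisation: the whole point of restricting to $\mathcal{A}^C$ is to keep the two-sided traces $u^\pm$ and $v^\pm$, which a naive convolution would blur. The rescale-then-extend-by-constants-then-convolve construction resolves this, and once it is in place the remaining ingredients, namely Jensen's inequality for the convex one-homogeneous integrand and the BV coarea slicing, are standard.
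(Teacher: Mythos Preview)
Your proof is correct and follows essentially the same strategy as the paper: a boundary-preserving mollification obtained by first rescaling in the $n$-direction (the paper uses the factor-$2$ map $(u,v)(2y)$, you use an affine $\delta$-squeeze, which is equivalent) and then convolving, combined with the coarea formula to pass from a general $u$ to a two-valued one. The only organisational difference is that the paper routes both inequalities through the intermediate class $\mathcal{A}^\infty$ of pairs with \emph{both} $u$ and $v$ smooth, whereas you handle $K\le K_C$ and $K\ge K_C$ directly; the analytic ingredients (dominated convergence for the cross term $w\cdot\nabla u$, uniform continuity of $v$ for the mollified measure, coarea slicing) are the same.
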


\begin{proof}
We prove the claim in three steps. If $[u]=0$ then one immediately obtains
$K=\alpha_v|[v]|$ from which the result follows easily by standard density results. Hence we only need to deal with the case $[u]\ne0$.

Step 1. First we show that
   $\inf E[\mathcal{A}^\infty,Q_n] \leq \inf E[\mathcal{A},Q_n]$, where
\begin{equation*}
\mathcal{A}^{\infty} = \left(C^{\infty}\left(\overline{Q_{n}}; \mathbb{R}\right) \times C^{\infty}\left(\overline{Q_{n}}; \mathbb{R}^d\right) \right) \cap \mathcal{A}.
\end{equation*}
To prove the inequality, we choose $(u,v) \in \mathcal{A}$ and can assume that $\|v\|_{L^\infty}\le M$.
We extend $u$ and $v$
 periodically to $\left\{ y \in\R^{d+1}:\left|y \cdot n\right| < \frac12 \right\}$ and constantly in the $n$-direction and
define $\left(\overline{u}, \overline{v}\right)(y) = (u,v)(2y)$. 
Then a straightforward computation shows that
\begin{equation*}
 E\left[ \overline{u},\overline{v}, Q_n \right] = E\left[ u,v, Q_n \right].
\end{equation*}
For $0< \varepsilon <\frac14$ we define
$\left(u_{\varepsilon}, v_{\varepsilon} \right) = \left(\overline{u}, \overline{v}\right) * \rho_{\varepsilon}$ where $\rho_{\varepsilon}\in C^\infty_c(B_\varepsilon)$ is a standard mollifier.
Since $\left( \overline{u}, \overline{v} \right)$ is constant on $\left\{y \in\R^{d+1}:  y \cdot n  > \frac14 \right\}$ and $\left\{y \in\R^{d+1}:  y \cdot n  < -\frac14 \right\}$,
the functions $\left( u_{\varepsilon}, v_{\varepsilon} \right)$ satisfy $\left( u_{\varepsilon}, v_{\varepsilon} \right) \in \mathcal{A}^\infty$.

To prove convergence of the energy we first observe that  
by the general properties of mollification of Sobolev functions we immediately
 obtain 
 $u_{\varepsilon} \rightarrow \overline{u}$ and 
$v_{\varepsilon} \rightarrow \overline{v}$ in $W^{1,1}\left(Q_n;\R\right)$
and $W^{1,1}\left(Q_n;\R^d\right)$, respectively. 
The more subtle term is $\int_{Q_n} \left| w_{\varepsilon} \cdot \nabla u_{\varepsilon}
\right|\d y$, where $w_{\varepsilon} = \left(1, v_{\varepsilon} \right)$.
Since 
 $\left\| w_{\varepsilon} \right\|_{L^{\infty}} \leq \left\| \overline{w} \right\|_{L^{\infty}}\le M+1$
we can estimate
\begin{equation*}
\left\| w_{\varepsilon} \cdot \nabla u_{\varepsilon} \right\|_{L^1\left(Q_n\right)} \leq 
\left\| w_{\varepsilon} \cdot \left( \nabla u_{\varepsilon} - \nabla \overline{u}\right)  \right\|_{L^1\left(Q_n\right)} 
+  \left\| \left(w_{\varepsilon} - \overline{w}\right) \cdot \nabla \overline{u} \right\|_{L^1\left(Q_n\right)} 
+ \left\| \overline{w} \cdot \nabla \overline{u} \right\|_{L^1\left(Q_n\right)}.
\end{equation*}
The first term on the right hand side can be estimated by
\begin{equation*}
\left\| w_{\varepsilon} \cdot \left( \nabla u_{\varepsilon} - \nabla \overline{u}\right)  \right\|_{L^1\left(Q_n\right)} 
\leq \left\| \overline{w} \right\|_{L^{\infty}\left(Q_n\right)} \left\| \nabla u_{\varepsilon} - \nabla \overline{u} \right\|_{L^1\left(Q_n\right)} \rightarrow 0.
\end{equation*}
For the second term we observe that $w_{\varepsilon} - \overline{w} \rightarrow 0$ pointwise a.e. and that 
\begin{equation*}
\left| \left(w_{\varepsilon} - \overline{w}\right) \cdot \nabla \overline{u}\right| \leq 2 \left\| \overline{w} \right\|_{L^{\infty}\left(Q_n\right)} \left| \nabla \overline{u} \right| \text{ pointwise}.
\end{equation*}
Hence by Lebesgue's dominated convergence theorem we derive
\begin{equation*}
\lim_{\varepsilon \to 0} \left\| \left(w_{\varepsilon} - \overline{w}\right) \cdot \nabla \overline{u} \right\|_{L^1\left(Q_n\right)} = 0.
\end{equation*}
Therefore 
\begin{equation*}
\inf E[\mathcal{A}^\infty,Q_n]\le
\limsup_{\varepsilon \to 0} E\left[u_{\varepsilon}, v_{\varepsilon}, Q_n \right] \leq E\left[\overline{u}, \overline{v}, Q_n \right] =E\left[u, v, Q_n \right]
\end{equation*}
concludes the proof of the first step.

Step 2. We show that
   $\inf E[\mathcal{A}^C,Q_n]\le \inf E[\mathcal{A}^\infty,Q_n]$.
Let  $(u,v) \in \mathcal{A}^\infty$, assume for definiteness that  $u^+ \geq u^-$.
The coarea formula yields
\begin{equation*}
\int_{Q_n} \left( {\alpha_u} \left| \nabla u \right| + {\alpha_F} \left|w \cdot \nabla u \right|\right) \d y  \geq \int^{u^+}_{u^-} \left( \int_{Q_n} {\alpha_u} \left| D \chi_{\{u<t\}} \right| + {\alpha_F} \left| w \cdot D \chi_{\{u < t\}} \right| \right) \d t.
\end{equation*}
Indeed, the first term is standard, the second one follows easily from 
\cite[(3.33) in Th. 3.40]{AmFuPa00} approximating $w$ uniformly with piecewise constant vector fields.

Thus there exists $t^* \in \left(u^-,u^+\right)$ such that 
\begin{equation*}
\int_{Q_n} \left({\alpha_u} \left| \nabla u\right| + {\alpha_F} \left| w \cdot \nabla u\right|\right) \d y \geq 
\left(u^+ - u^-\right) \int_{Q_n}  {\alpha_u} \left| D \chi_{\left\{u < t^*\right\}} \right| + {\alpha_F} \left| w \cdot D \chi_{\left\{u < t^*\right\}} \right|.
\end{equation*}
We define  $Q^-_n = \left\{ y \in Q_n: u(y) < t^* \right\}$ and $u^* = u^+ + \left(u^- - u^+\right) \chi_{Q^-_n}$. Then
\begin{equation*}
\int_{Q_n} {\alpha_u}  \left| \nabla u \right| + {\alpha_F} \left| w \cdot \nabla u\right| \text{ d}y \geq \int_{Q_n} {\alpha_u} \left| D u^* \right| + {\alpha_F} \left|w \cdot D u^*\right|
\end{equation*}
where $Du^*$ on the right hand side has to be interpreted as a measure.
Since the function  $w$ is smooth, it is in particular continuous on
the jump set $J_{u^*}$, hence the integral is well defined.
Furthermore by the trace theorem $u^*$ fulfills the boundary conditions.
 Therefore $(u^*,v)\in \mathcal{A}^C$ and   $\inf E[\mathcal{A}^C,Q_n]\le \inf E[\mathcal{A}^\infty,Q_n]$. 

Step 3. We prove that    $\inf E[\mathcal{A},Q_n]\le\inf E[\mathcal{A}^C,Q_n]$. 
As in Step 1 we choose $(u,v)\in \mathcal{A}^C$, extend both functions periodically,
scale  them  to $(\overline u,\overline v)$,
 and mollify $\overline u$ (but not $\overline v$) to obtain $u_\varepsilon=\overline u \ast \rho_\varepsilon$.
For the same reasons as in Step 1
we have  $(u_\varepsilon, \overline v)\in \mathcal{A}$ and
\begin{equation*}
  \lim_{\varepsilon\to0} 
\int_{Q_n} |\nabla u_\varepsilon| \d y=\int_{Q_n} |D \overline u|=\int_{Q_n} |D u| \,.
\end{equation*}
It remains to show that $\int_{Q_n} |\overline w\cdot\nabla u_\varepsilon|\d y\to
 \int_{Q_n} |\overline w\cdot D\overline u|$, where as usual
 $\overline w=(1,\overline v)$.
To see this, we deduce from $|D\overline u|(\partial Q_n)=0$ for the Radon measure 
$\overline w\cdot D\overline u$ that (cf.~ \cite[Prop.~3.7]{AmFuPa00})
\begin{equation*} 
  \lim_{\varepsilon\to0} 
\int_{Q_n} |\rho_\varepsilon\ast (\overline w\cdot D\overline u)| \d y=
\int_{Q_n} |\overline w\cdot D\overline u|\,.
\end{equation*}
Furthermore, 
\begin{equation*}
\int_{Q_n}  [
\rho_\varepsilon\ast (\overline w\cdot D\overline u)-
\overline w\cdot (\rho_\varepsilon\ast D\overline u) ]\d y
=
\int_{Q_n} \left[ \int_{B_\varepsilon(y)} \rho_\varepsilon(y-z)
 (\overline w(z)-\overline w(y))\cdot D\overline u(z)\right] \d y
\end{equation*}
converges to zero because $\overline w$ is uniformly continuous on $Q_n$.
Hence $$\limsup_{\varepsilon\to0} E[u_\varepsilon,\overline v,Q_n]\le E[\overline u,\overline v,Q_n]=E[u, v,Q_n]\,,$$ which proves the claim.
\end{proof}

Next we 
 give an iterated relaxation formula.
 \begin{lmm}\label{lemmarecrelax}
 For $(u,v)\in \mathcal{A}^C$ we have
   \begin{equation*}
 K(u^{+}, u^{-},v^{+},v^{-}, n) = \inf \left\{ 
E^*[u,v,Q_{n}]: (u,v) \in \mathcal{A}^C \right\}\,,\end{equation*}
where $E^*[u,v,Q_n]= \int_{Q_n} {\alpha_v} |Dv| +\int_{Q_n\cap J_u} K(u^+,u^-,v,v,\nu) d\mathcal{H}^d$ for $(u,v) \in \mathcal{A}^C$ with
$\nu$ being the normal to $J_u$.
 \end{lmm}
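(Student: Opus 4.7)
The plan is to prove the equality by two inequalities, making use of Lemma \ref{lem:coarea} which already gives $K = \inf_{(u,v)\in\mathcal{A}^C} E[u,v,Q_n]$. Thus the task is to compare this with $\inf_{(u,v)\in\mathcal{A}^C} E^*[u,v,Q_n]$.

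\textbf{Inequality $\inf_{\mathcal{A}^C} E^* \le K$ (the trivial direction).} For any $(u,v)\in\mathcal{A}^C$ and $\mathcal{H}^d$-a.e.\ $y\in J_u$, the piecewise-constant profile with $v$ held at the value $v(y)$ and $u$ jumping across the hyperplane of normal $\nu(y)$ is an admissible test function for $K(u^+,u^-,v(y),v(y),\nu(y))$ with energy $\alpha_u|[u]| + \alpha_F|[u]|\,|w\cdot\nu|$. This gives the pointwise bound $K(u^+,u^-,v,v,\nu)\le \alpha_u|[u]| + \alpha_F|[u]|\,|w\cdot\nu|$ on $J_u$. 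Integrating, and noting that for $u\in BV(Q_n;\{u^+,u^-\})$ with $v$ smooth the right-hand side coincides with $\alpha_u|Du|(Q_n) + \alpha_F\int_{Q_n}|w\cdot Du|$, I obtain $E^*[u,v,Q_n]\le E[u,v,Q_n]$, and passing to the infimum yields the inequality.

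\textbf{Inequality $K \le \inf_{\mathcal{A}^C} E^*$ (the constructive direction).} Fix $(u,v)\in\mathcal{A}^C$ and $\eta>0$; I will build a recovery sequence $(u_\epsilon,v_\epsilon)\in\mathcal{A}$ with $\limsup_{\epsilon\to 0} E[u_\epsilon,v_\epsilon,Q_n]\le E^*[u,v,Q_n]+\eta$. Using the $\mathcal{H}^d$-rectifiability of $J_u$ together with a Besicovitch/Vitali-type covering, for small $\epsilon>0$ I select a pairwise disjoint family of rotated cubes $C_i=y_i+\epsilon Q_{\nu(y_i)}$ with $y_i\in J_u$ that covers $\mathcal{H}^d$-almost all of $J_u$. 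Inside each $C_i$ I substitute, after an $\epsilon$-rescaling, an $\eta$-optimal competitor for $K(u^+,u^-,v(y_i),v(y_i),\nu(y_i))$; outside the cells I keep $(u,v)$ unchanged. Summing the cell-wise energies,
\[
E[u_\epsilon,v_\epsilon,Q_n] \le \alpha_v\!\int_{Q_n}\!|\nabla v|\,dy + \sum_i \epsilon^d\bigl[K(u^+,u^-,v(y_i),v(y_i),\nu(y_i))+\eta\bigr] + \text{(gluing error)}.
\]
As $\epsilon\to 0$ the Riemann-type sum converges to $\int_{J_u} K(u^+,u^-,v,v,\nu)\,d\mathcal{H}^d$ by continuity of $K$ in its arguments and the covering property, so that $K\le E^*[u,v,Q_n]+\eta$, and then $\eta\to 0$.

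\textbf{Where the technical weight sits.} The main obstacle is the gluing at the boundary of each cell $C_i$. The inserted competitor enforces $v\equiv v(y_i)$ on the two faces normal to $\nu(y_i)$ and periodicity on the lateral faces, whereas the ambient $v$ is only smooth and $J_u$ is generically curved, so neighbouring cells do not mesh and there is a mismatch with the outer $v$. The remedy is to introduce transition layers of width $o(\epsilon)$ near $\partial C_i$ in which $v$ is linearly interpolated between the interior value $v(y_i)$ and the exterior value; the Lipschitz bound $|v-v(y_i)|=O(\epsilon)$ throughout each cell keeps the extra $\alpha_v|\nabla v|$ contribution lower-order. A similar adjustment near the lateral boundary resolves the mismatch of $u$ at a cost of $O(\epsilon^d)$ per cell, which is negligible after summation. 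These estimates, together with the coercivity of $E$ which controls the auxiliary smoothing needed to bring $(u_\epsilon,v_\epsilon)$ into $\mathcal{A}$, make the gluing error vanish in the limit and complete the argument.
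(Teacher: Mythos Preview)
Your easy direction $\inf_{\mathcal{A}^C}E^*\le K$ is handled exactly as in the paper: the pointwise inequality $K(u^+,u^-,v,v,\nu)\le \alpha_u|[u]|+\alpha_F|[u]|\,|w\cdot\nu|$ gives $E^*\le E$ on $\mathcal{A}^C$, and Lemma~\ref{lem:coarea} closes the argument.

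For the hard direction $K\le\inf_{\mathcal{A}^C}E^*$ you take a genuinely different, and much heavier, route than the paper. The paper's observation is that the expression $E^*[u,v,Q_n]$ given in the statement is nothing other than the Fonseca--M\"uller relaxation of $E$ from (\ref{eqfstar}), specialized to pairs $(u,v)\in\mathcal{A}^C$ (where $u$ is two-valued and $v$ is smooth, so the absolutely continuous and Cantor parts reduce to $\alpha_v|\nabla v|$ and the jump part lives on $J_u$ with $v^+=v^-=v$). In particular $E^*$ is $L^1$-lower semicontinuous. Given $(u,v)\in\mathcal{A}^C$, extend periodically, rescale $(u_j,v_j)(y)=(u,v)(jy)$, and note that $(u_j,v_j)\to(u_*,v_*)=(u^-,v^-)+([u],[v])\chi_{\{y\cdot n>0\}}$ in $L^1$ while $E^*[u_j,v_j,Q_n]=E^*[u,v,Q_n]$ by one-homogeneity. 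Lower semicontinuity then gives
\[
K(u^+,u^-,v^+,v^-,n)=E^*[u_*,v_*,Q_n]\le\liminf_j E^*[u_j,v_j,Q_n]=E^*[u,v,Q_n],
\]
which is the whole proof. No gluing, no covering, no transition layers.

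Your constructive approach is essentially a by-hand reproof of the recovery-sequence part of the relaxation theorem you are already invoking elsewhere. It can be made to work, but as written it has soft spots you should be aware of: you appeal to ``continuity of $K$ in its arguments'' to pass from the Riemann-type sum to the integral, yet $K$ is only a priori upper semicontinuous (as an infimum), and you would need at least this together with a careful choice of cube centers; the handling of the $u$-mismatch on lateral faces when $J_u$ is curved is more delicate than the one line you give it (the extra perimeter created per cell must be $o(\epsilon^d)$, not merely $O(\epsilon^d)$, after summing over $\sim\epsilon^{-d}$ cells). The paper's rescaling-plus-lsc trick sidesteps all of this and is worth internalizing: once a cell formula arises from a relaxation, the relaxed functional itself is the right tool to compare the cell value with more complex test profiles.
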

 \begin{proof}
   Let $E^*$ be the relaxation of $E$, defined as in (\ref{eq:Fstar}). From the general relaxation result 
we know that it has a form corresponding to (\ref{eqfstar}), namely,
\begin{alignat*}1
E^*[u,v,Q_n]=&
\int_{Q_n} g\left(v,\nabla u, \nabla v\right) \text{ d}y 
+ \int_{Q_n} g\left(v,D^c u, D^c v\right)\\
& + \int_{J_{(u,v)}\cap Q_n}  K\left(u^+,u^-,v^+,v^-,\nu\right) \text{ d}\mathcal{H}^{d}\,.
 \end{alignat*}
If  $(u,v)\in \mathcal{A}^C$ in particular
$E^*[u,v,Q_n]= \int_{Q_n} {\alpha_v} |Dv| +\int_{Q_n\cap J_u} K(u^+,u^-,v,v,\nu) d\mathcal{H}^d$, as given in the statement.

Fix $u^+$, $u^-$, $v^+$, $v^-$, $n$; let $(u,v)\in \mathcal{A}^C$, extend $u,v$
periodically and define as in Step 1 of the proof of Lemma \ref{lem:coarea}
$(u_j,v_j)=(u,v)(jx)$. Then $(u_j,v_j)\to (u_*, v_*)=(u^-,v^-)+([u],[v])
\chi_{y\cdot n>0}$ in $L^1$. Hence, by the lower semicontinuity of $E^*$ we obtain
\begin{equation*}
  K(u^{+}, u^{-},v^{+},v^{-}, n) = E^*[u_*, v_*,Q_n]\le \liminf_{j\to\infty} E^*[u_j, v_j,Q_n]=E^*[u,v,Q_n]\,.
\end{equation*}
The other inequality follows immediately from Lemma \ref{lem:coarea} and $E\geq E^*$.
 \end{proof}

 Next we prove two more general properties, namely, subadditivity and convexity of the surface energy $K$, which are well 
known for example in the setting of variational problems on partitions \cite{AmbrosioBraides1,AmbrosioBraides2}.
We start with subadditivity and its main consequence in the present setting.

 \begin{lmm}[Subadditivity]\label{lemmasubaddit}
Let $u^+,u^- \in \R$, $v^+,v^- \in \R^d$, $n\in S^d$. Then:
\begin{enumerate}
\item\label{lemmasubaddit1} For any $u'\in \R$, $v'\in\R^d$ one has
  \begin{equation*}
K(u^{+}, u^{-},v^{+},v^{-}, n)\le K(u^{+}, u',v^{+},v', n)
+K(u', u^{-},v',v^{-}, n)\,.
      \end{equation*}
\item\label{lemmasubaddit2} For any $a\in \R^d$ one has
  \begin{equation*}
K(u^{+}, u^{-},v^{+},v^{-}, n)\le {\alpha_u} |[u]|+{\alpha_v} (|v^+-a|+|v^--a|)
+{\alpha_F}|[u]|\, |(1,a)\cdot n|\,.
  \end{equation*}
\end{enumerate}
 \end{lmm}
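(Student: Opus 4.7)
The plan is to prove (i) by an explicit gluing construction using an \emph{isotropic} rescaling, and then to deduce (ii) from (i) together with two elementary computations of $K$ in degenerate cases.

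For (i), fix $\varepsilon>0$ and pick competitors $(u_j,v_j)\in\mathcal{A}$ with $E[u_j,v_j,Q_n]\le K_j+\varepsilon$, where $K_1=K(u^+,u',v^+,v',n)$ and $K_2=K(u',u^-,v',v^-,n)$. Extend both $(u_j,v_j)$ laterally by periodicity to $\R^{d+1}$. On the two half-cubes $Q_n^\pm=Q_n\cap\{\pm y\cdot n>0\}$ define
\begin{equation*}
(\tilde u,\tilde v)(y)=\begin{cases}(u_1,v_1)(2y-\tfrac12 n)&\text{on }Q_n^+,\\(u_2,v_2)(2y+\tfrac12 n)&\text{on }Q_n^-.\end{cases}
\end{equation*}
Writing $z=2y\mp\tfrac12 n$, the map sends $y\cdot n=\pm\tfrac12$ to $z\cdot n=\pm\tfrac12$ and both sides of $\{y\cdot n=0\}$ to $z\cdot n=\mp\tfrac12$. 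Hence $(\tilde u,\tilde v)$ attains the prescribed traces $(u^\pm,v^\pm)$ on the outer $n$-faces of $Q_n$ and equals $(u',v')$ from both sides on the interior interface, so $(\tilde u,\tilde v)\in W^{1,1}(Q_n;\R^{1+d})$. The lateral rescaling $z_\perp=2y_\perp$ packs $2^d$ lateral periods of $(u_j,v_j)$ inside $Q_n^\pm$, and $(\tilde u,\tilde v)$ itself is periodic of period $1$ in the $m^i$ directions with $\|\tilde v\|_{L^\infty}\le M$; hence $(\tilde u,\tilde v)\in\mathcal{A}$.

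The energy is then computed via the $1$-homogeneity of $g(v,\cdot,\cdot)$ in its last two arguments: the isotropic scaling multiplies $\nabla u,\nabla v$ by $2$ and the volume element by $2^{-(d+1)}$, so the pointwise rule reads $g(\tilde v,\nabla\tilde u,\nabla\tilde v)(y)\,\d y=2^{-d}\,g(v_j,\nabla u_j,\nabla v_j)(z)\,\d z$. The change of variables $y\mapsto z$ sends $Q_n^\pm$ onto a box that contains exactly $2^d$ lateral periods of $Q_n$, so by periodicity of the integrand
\begin{equation*}
\int_{Q_n^\pm}g(\tilde v,\nabla\tilde u,\nabla\tilde v)\,\d y=2^{-d}\cdot 2^d\int_{Q_n}g(v_j,\nabla u_j,\nabla v_j)\,\d z=E[u_j,v_j,Q_n].
\end{equation*}
Summing over the two halves gives $K(u^+,u^-,v^+,v^-,n)\le E[\tilde u,\tilde v,Q_n]\le K_1+K_2+2\varepsilon$, and the claim follows on letting $\varepsilon\to 0$. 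The main conceptual obstacle is precisely choosing the right rescaling: a purely vertical compression would multiply only $\partial_n$ by $2$, which is incompatible with the $1$-homogeneity of $g$, whereas the isotropic scaling is exactly compensated by the $2^d$ lateral copies made available through the periodic boundary conditions.

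For (ii), apply (i) twice, first with $(u',v')=(u^+,a)$ and then with $(u',v')=(u^-,a)$ on the second summand, to obtain
\begin{equation*}
K(u^+,u^-,v^+,v^-,n)\le K(u^+,u^+,v^+,a,n)+K(u^+,u^-,a,a,n)+K(u^-,u^-,a,v^-,n).
\end{equation*}
Each term is evaluated by a trivial one-dimensional competitor: whenever $[u]=0$ only the $\alpha_v|\nabla v|$-part of $g$ is active, so taking $u$ constant and $v$ linearly interpolating along $n$ yields $K(u^\pm,u^\pm,v^\pm,a,n)\le\alpha_v|v^\pm-a|$; for $K(u^+,u^-,a,a,n)$ one takes $v\equiv a$ together with a simple jump of $u$ along $n$, giving $\alpha_u|[u]|+\alpha_F|[u]|\,|(1,a)\cdot n|$. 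Summing these three bounds produces the estimate in (ii); no further obstacle arises.
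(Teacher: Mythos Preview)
Your proof is correct. For part (ii) you argue exactly as the paper does: apply (i) twice to insert the intermediate states $(u^+,a)$ and $(u^-,a)$, then bound the three resulting $K$-terms by explicit one-dimensional competitors.

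For part (i), however, you take a genuinely different route from the paper. The paper does \emph{not} build a $W^{1,1}$ competitor at all. Instead it invokes the iterated relaxation formula (their Lemma~\ref{lemmarecrelax}), which allows one to compute $K$ by minimizing the relaxed energy $E^*$ over the class $\mathcal{A}^C$. They then test with the piecewise constant $BV$ functions
\[
(u_j,v_j)(y)=
\begin{cases}
(u^+,v^+)&\text{if } y\cdot n>\tfrac1j,\\
(u',v')&\text{if } |y\cdot n|\le\tfrac1j,\\
(u^-,v^-)&\text{if } y\cdot n<-\tfrac1j,
\end{cases}
\]
for which $E^*[u_j,v_j,Q_n]=K_1+K_2$ exactly, and conclude by lower semicontinuity of $E^*$ along $(u_j,v_j)\to(u_\infty,v_\infty)$. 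Your argument stays instead at the level of the original definition of $K$: you pick $\varepsilon$-optimal $W^{1,1}$ competitors for $K_1$ and $K_2$, rescale each isotropically by a factor $2$, and glue them across $\{y\cdot n=0\}$, matching traces at the value $(u',v')$. The crucial observation that the $1$-homogeneity of $g$ in $(p,q)$, together with the $2^d$ lateral periods created by the isotropic rescaling, makes the energy add up exactly is correct and cleanly executed. Your approach is more self-contained (it needs neither $E^*$ nor Lemma~\ref{lemmarecrelax}); the paper's approach is shorter once that machinery is in place and avoids the bookkeeping of the rescaling.
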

 \begin{proof}
   \ref{lemmasubaddit1}: For $j\in\N$ we define
   \begin{equation*}
     (u_j,v_j)(y)=
     \begin{cases}
       (u^+,v^+) & \text{ if } y\cdot n>\frac1j\\
       (u',v') & \text{ if } -\frac1j\le y\cdot n\le\frac1j\\
       (u^-,v^-) & \text{ if } y\cdot n<-\frac1j
     \end{cases}\,.
   \end{equation*}
   Clearly $(u_j,v_j)\to(u_\infty,v_\infty)=(u^-,v^-)+([u],[v]) \chi_{y\cdot n>0}$ in $L^1$ 
and $E^*[u_j,v_j,Q_n]=K(u^{+}, u',v^{+},v', n)+K(u', u^{-},v',v^{-}, n)$ for all $j>2$.
   Since $E^*$ is lower semicontinuous, we obtain
   \begin{alignat*}1
     K(u^+,u^-,v^+,v^-,n)&=E^*[u_\infty,v_\infty,Q_n]
     \le\liminf_{j\to\infty}E^*[u_j,v_j,Q_n] \,,
   \end{alignat*}
which concludes the proof.

   \ref{lemmasubaddit2}: Two applications of \ref{lemmasubaddit1} and the fact that $K=\alpha_v |[v]|$ if $|[u]|=0$ give
   \begin{alignat*}1
     K(u^{+}, u^{-},v^{+},v^{-}, n)&\le 
     K(u^+, u^+,v^+,a, n)+
     K(u^+, u^-,a,a, n)+     K(u^-, u^-,a,v^-, n)\\
&\le {\alpha_v} |v^+-a| + ({\alpha_u} |[u]|
+{\alpha_F}|[u]|\, |(1,a)\cdot n|)+
{\alpha_v}|a-v^-|\,.
   \end{alignat*}
 \end{proof}

We next show that $K$ is convex in the last argument, after having been extended to a positively one-homogeneous function.
This will be linked to the possible development of oscillations of the interface.
 \begin{lmm}[Convexity]\label{lemmacomvex}
For given $u^+,u^-\in\R$, $v^+,v^-\in\R^d$ we define $h:\R^{d+1}\to\R$ by
\begin{equation*}
  h(n)=|n| K(u^+,u^-,v^+,v^-,\frac{n}{|n|})\,.
\end{equation*}
The function $h$ is positively one-homogeneous and convex,
in particular, 
\begin{equation}\label{eqhsubaddit}
h(n^++n^-)\le  h(n^+)+h(n^-)\hskip1cm \text{ for all $n^+,n^-\in \R^{d+1}$}.
\end{equation}
 \end{lmm}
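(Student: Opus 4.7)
My plan begins with positive one-homogeneity, which follows immediately from $\lambda n / |\lambda n| = n/|n|$ for $\lambda > 0$, giving $h(\lambda n) = \lambda h(n)$ (setting $h(0) := 0$ in keeping with continuity of $K$). For a positively one-homogeneous function, convexity is equivalent to the subadditivity \eqref{eqhsubaddit}: assuming subadditivity,
\[
h(\lambda a + (1-\lambda) b) \le h(\lambda a) + h((1-\lambda) b) = \lambda h(a) + (1-\lambda) h(b)
\]
by homogeneity, and the converse is immediate from evaluating convexity at $\lambda=1/2$. Hence the task reduces entirely to establishing \eqref{eqhsubaddit}.

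To prove subadditivity I would construct a zigzag (laminate) competitor in the infimum problem defining $K$. Degenerate configurations (one of $n^\pm$ zero, or $n^+$ parallel to $n^-$) follow from one-homogeneity and $K\ge0$, so assume $n^+,n^-$ linearly independent. Let $n=n^++n^-$, $\nu=n/|n|$, $\Pi=\mathrm{span}(n^+,n^-)$, and $R$ the $90^\circ$ rotation on $\Pi$. The central geometric identity is
\[
R(n^+)+R(n^-)=R(n),
\]
with $R(n)\perp\nu$ and $|R(n)|=|n|$. Consequently, a piecewise-linear two-leg path of lengths $|n^+|$ and $|n^-|$ in the directions $R(n^+)/|n^+|$ and $R(n^-)/|n^-|$ (i.e.\ tangent to hyperplanes with normals $n^\pm/|n^\pm|$) advances by exactly $R(n)$, perpendicular to $\nu$ and of magnitude $|n|$. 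Fix $\epsilon,\delta>0$, select near-optimal microscopic profiles $(u^\pm_\epsilon,v^\pm_\epsilon)\in\mathcal{A}$ realizing $K(u^+,u^-,v^+,v^-,n^\pm/|n^\pm|)$ up to $\epsilon$, and assemble a competitor $(u_\delta,v_\delta)$ in $Q_\nu$ whose jump set is a zigzag in $\Pi$ of period $\delta|n|$ along the $\nu^\perp$-direction (translation-invariant in the $d-1$ directions perpendicular to $\Pi$). Each period consists of two facets of tangential lengths $\delta|n^+|$ and $\delta|n^-|$ carrying suitably rescaled copies of the two microscopic profiles; outside this thin slab set $(u_\delta,v_\delta)=(u^\pm,v^\pm)$ on the respective side of $\{y\cdot\nu=0\}$.

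The energy computation proceeds by direct summation: per period, the interfacial contribution is $\delta|n^+|K(\ldots,n^+/|n^+|)+\delta|n^-|K(\ldots,n^-/|n^-|)+o(\delta)$, the $o(\delta)$ arising from vertex matching. Multiplying by the $1/(\delta|n|)$ periods across $Q_\nu$ and by $|n|$, and taking the infimum in the definition of $K(u^+,u^-,v^+,v^-,\nu)$, yields $h(n)\le h(n^+)+h(n^-)+o(1)$; letting $\epsilon,\delta\to0$ concludes the argument. The main technical obstacle I anticipate is orientation consistency at the zigzag vertices: the $(u^+,v^+)$-side of each facet matches the $(u^+,v^+)$-region of $Q_\nu$ globally only when $n^\pm\cdot\nu>0$ (equivalently $n^+\cdot n^->-\min(|n^+|^2,|n^-|^2)$); in the non-generic case one must modify the construction by letting the zigzag briefly double back so that each facet remains correctly oriented, and verify that the added interfacial length contributes only $o(1)$, or alternatively invoke the general Fonseca--M\"uller relaxation theory already cited, which guarantees convexity of the one-homogeneous extension of the surface energy density. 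A secondary technicality is the $O(\delta)$ boundary adjustment at $\partial Q_\nu$ needed to reconcile the zigzag period with the cube's edge length, which is likewise absorbed in the limit $\delta\to0$.
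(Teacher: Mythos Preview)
Your proposal is correct and follows the same geometric strategy as the paper: reduce convexity to subadditivity via one-homogeneity, dispose of the degenerate (parallel) case, and build a periodic zigzag in $\Pi=\mathrm{span}(n^+,n^-)$ using a $90^\circ$ rotation, with facet normals $n^\pm/|n^\pm|$ and lengths $|n^\pm|$ per period. The execution differs. You work directly in $\mathcal{A}$, inserting near-optimal $W^{1,1}$ profiles on each facet and tracking $o(\delta)$ gluing and boundary errors. The paper instead sets $(u,v)=(u^-,v^-)+([u],[v])\chi_{\omega^+}$ piecewise constant (no profiles inserted), rescales to $(u_j,v_j)(y)=(u,v)(jy)$, and invokes lower semicontinuity of the \emph{relaxed} functional $E^*$---whose surface density is already $K$, by the iterated-relaxation lemma proved just before---to obtain $h(n)=E^*[u_*,v_*,Q_n]\le\liminf_j E^*[u_j,v_j,Q_n]=h(n^+)+h(n^-)$ with no error terms at all. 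Your route is more self-contained in that it does not presuppose the relaxation representation; the paper's is considerably shorter. One remark on your orientation concern: it is in fact a non-issue. Because the zigzag is consistently parametrized (the tangent runs through $Jn^+$ then $Jn^-$ each period, with $\omega^-$ always on the same side), the outer normal to $\omega^-$ on each leg is automatically $+n^\pm/|n^\pm|$, with the correct sign even when $n^\pm\cdot n<0$; the curve may fold back in the $Jn$-direction but the normal orientation does not flip, so no doubling-back modification is needed.
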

 \begin{proof}
   Positive one-homogeneity is obvious from the definition. It implies
$\lambda h(n^+)+(1-\lambda) h(n^-)=h(\lambda n^+)+h((1-\lambda) n^-)$, hence convexity is equivalent to subadditivity; therefore it suffices
 to prove the inequality  (\ref{eqhsubaddit}).
Furthermore, it suffices to consider the case that
$|n^++n^-|=1$ with $n^+$ and $n^-$ linearly independent (the case that $n^+$ and $n^-$ are parallel, including the case that one vanishes,  is immediate).

  \begin{figure}[rotate = 300]
\centering
\begin{tikzpicture}[rotate = 300, scale =0.8]
 \foreach \x in {0,1,2}
\draw[red] (1-2*\x, 1+2*\x) -- (-0.5-2*\x, 1.5+2*\x);

\foreach \x in {0,1,2}
\draw[blue] (-0.5-2*\x, 1.5+2*\x) -- (-1-2*\x, 3+2*\x);

\node at (-3.5,4) {$\omega^+$};
\node at (-1.5,4) {$\omega^-$};
 \end{tikzpicture}
 \quad
 \begin{tikzpicture}[rotate = 300, scale =0.8]

\fill[gray] (-1,3) -- (-2,2) -- (-4,4) -- (-3,5) -- (-2.9,4.7) -- (-2.5,3.5) -- (-1.3,3.1) -- (-1,3);
\fill[gray!50] (-1,3) -- (0,4) -- (-2,6) -- (-3,5) -- (-2.9,4.7) -- (-2.5,3.5) -- (-1.3,3.1) -- (-1,3);

 \foreach \x in {0,1,2}
\draw[red] (1-2*\x, 1+2*\x) -- (-0.5-2*\x, 1.5+2*\x);

\foreach \x in {0,1,2}
\draw[blue] (-0.5-2*\x, 1.5+2*\x) -- (-1-2*\x, 3+2*\x);

\draw (-2,2) -- (-4,4) -- (-2,6) -- (0,4) -- (-2,2);
\draw[->] (-1,5) -- (1,7);
\draw(1,7) node[black, anchor=north]{$n$};

\draw(-1.55,5.55) node[black, anchor= north, rotate=75]{$u=u^+$};
\draw(-3.55,3.55) node[black, anchor= south, rotate=75]{$u=u^-$};

\end{tikzpicture}

\caption{The construction of $\gamma$ is a periodic laminate using $\gamma^+$ and $\gamma^-$ that splits $\mathbb{R}^{2}$ into two connected components $\omega^+$ and $\omega^-$. Then $u$ is defined as $u^-$ on $\omega^-$
and as $u^+$ on $\omega^+$.}
\label{figure:constructiongamma}
\end{figure}
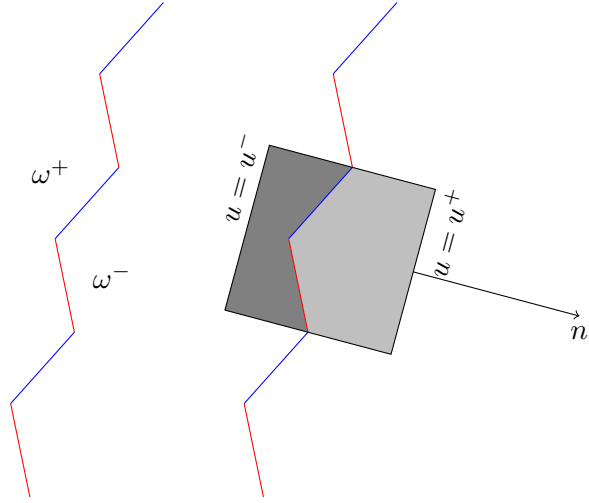

   By the scaling argument used in the proof of Lemma \ref{lemmarecrelax}
   it suffices to construct functions $(u,v): \R^{d+1}\rightarrow \R^{d+1}$ which are
   periodic in the $d$ directions orthogonal to $n = n^+ + n^-$ and
   with $(u,v)(y)=(u^\pm,v^\pm)$ whenever $\pm y\cdot
   n > L$, for some $L>0$. 
   We shall first construct a curve $\gamma$ that can be interpreted as the jump set of a
   function $u$ satisfying the boundary conditions. 
   The curve $\gamma$ will consist of two subsets, one has 
   normal  $n^+/|n^+|$ and measure $\left|n^+\right|$, the other has normal  $n^-/|n^-|$ and measure
   $\left|n^-\right|$. To make this precise, 
we define $n_p^\pm=Jn^\pm$,  where  $J\in SO(d+1)$ is a 90-degree rotation in the plane spanned by $n^+$ and $n^-$.
We define 
  \begin{equation*}
    \gamma^+ = [0,1) n^+_p \hskip2mm\text{ and } \hskip2mm\gamma^- = 
n^+_p +[0,1)n^-_p\,,
  \end{equation*}
so that
  $\gamma^-$ ends at $n^+_p+n^-_p=Jn$, with $n=n^++n^-$. 
We set $m^1=Jn$, choose $m^2, \dots, m^d$  such that $\{n,m^1,\dots, m^d\}$ is an orthonormal basis of $\R^{d+1}$, and define
 $\Sigma = \Z m^1 +(\gamma^+ \cup
  \gamma^-) + \sum_{i\ge 2} m^i\R$, see Fig.~\ref{figure:constructiongamma}
 (notation: $A+cB=\{a+cb: a\in A, b\in B\}$). This is a $d$-dimensional surface, corresponding to the constant extension in the directions $m^2,\dots,
 m^d$ of the curve obtained joining alternatively copies of the segments
$\gamma^+$ and $\gamma^-$.
 In particular, $\R^{d+1} \setminus \Sigma$ has exactly two connected components;
  one of them, call it $\omega^-$, contains $\{y\cdot n<-L\}$ and the other,
  call it $\omega^+$,   contains $\{y\cdot n>L\}$, 
where $L=|n^+|+|n^-|+1$, see Fig.~\ref{figure:constructiongamma}.   We set 
$(u,v)=(u^-,v^-)+([u],[v]) \chi_{\omega^+}$.
  Then the normal to the jump set is $n^+/|n^+|$ on a set of 
measure $|n^+_p|=|n^+|$, and correspondingly for $n^-$.
We define by rescaling $(u_j,v_j)(y)=(u,v)(jy)$, so that
  $(u_j,v_j)\to (u_*, v_*)=(u^-,v^-)+([u],[v]) \chi_{y\cdot n>0}$ in $L^1$.
 Therefore, dropping the arguments $u^+,u^-,v^+,v^-$ for brevity,
  \begin{alignat*}1
    E^*[u_*,v_*,Q_n]=  h(n^++n^-)&= K(n^++n^-)\\
&\le |n^+| K(\frac{n^+}{|n^+|})
    +|n^-| K(\frac{n^-}{|n^-|})     =h(n^+)+h(n^-).
  \end{alignat*}
 \end{proof}

\subsection{Upper and lower bounds for $d=1$}\label{sec:upper}

 \begin{prpstn}[Construction for $d=1$]\label{propupperbound1d}
Let $d=1$ and $u^+,u^-,v^+,v^- \in \R$, $n \in S^1$. Then  for any $N^+\in\R^2$ we have
\begin{equation*}
K(u^{+}, u^{-},v^{+},v^{-}, n)\!\le\! \left({\alpha_u}\! \left|[u]\right| \!+\! {\alpha_v}\!
\left|[v]\right|\right)\!
\left(\left|N^+\right| \!+\! \left|N^-\right|\right)  
 + {\alpha_F} \left|[u]\right| 
\left( \left|N^+ \!\!\cdot w^+\right| +\left| N^- \!\!\cdot w^-\right| \right),
\end{equation*}
where $N^- = n - N^+$ and $w^\pm=(1,v^\pm)$.
 \end{prpstn}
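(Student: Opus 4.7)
The plan is to derive this upper bound as a direct corollary of the two preliminary lemmas already established for $K$, rather than by constructing a competitor in $\mathcal{A}^C$ by hand. The construction is implicitly encoded in the proof of Lemma \ref{lemmacomvex}: a periodic laminate whose jump set alternates between pieces with normals $N^+/|N^+|$ and $N^-/|N^-|$ of total area $|N^+|$ and $|N^-|$ respectively. But rather than repeat the geometric construction, I would extract the bound from the abstract subadditivity and convexity statements.

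The first step is to invoke Lemma \ref{lemmacomvex}. Since $h$ is positively one-homogeneous and subadditive, and since $n = N^+ + N^-$, we obtain
\begin{equation*}
 K(u^+,u^-,v^+,v^-,n) = h(n) \le h(N^+) + h(N^-).
\end{equation*}
If either $N^+$ or $N^-$ vanishes, the inequality reduces to an instance of Lemma \ref{lemmasubaddit}(ii) with the appropriate choice of $a$, so I will assume from now on that both are nonzero.

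The second step is to estimate $h(N^\pm)$ individually. By one-homogeneity, $h(N) = |N|\, K(u^+,u^-,v^+,v^-, N/|N|)$. Applying Lemma \ref{lemmasubaddit}(ii) to the unit vector $N/|N|$ with a parameter $a \in \R^d$ and then multiplying by $|N|$ yields
\begin{equation*}
 h(N) \le |N|\bigl(\alpha_u |[u]| + \alpha_v(|v^+-a|+|v^--a|)\bigr) + \alpha_F |[u]|\,|(1,a)\cdot N|.
\end{equation*}
I would then make the natural choice that associates each piece with the velocity on the side of the laminate it bounds: $a = v^+$ for $N = N^+$ (so $|v^+-a|=0$, $|v^--a|=|[v]|$, and $(1,a)\cdot N^+ = w^+\cdot N^+$) and $a = v^-$ for $N = N^-$.

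Combining these two bounds with Step 1 gives exactly
\begin{equation*}
 K \le (\alpha_u |[u]| + \alpha_v |[v]|)(|N^+| + |N^-|) + \alpha_F |[u]|(|N^+\cdot w^+| + |N^-\cdot w^-|),
\end{equation*}
which is the stated inequality. There is no real obstacle: the proof is a clean two-step application of the subadditivity tools. The only points requiring a moment of attention are verifying that the degenerate cases $|N^\pm|=0$ are absorbed into Lemma \ref{lemmasubaddit}(ii), and checking that the asymmetric choice of $a$ on the two halves is precisely what is needed to produce $|N^+\cdot w^+| + |N^-\cdot w^-|$ rather than any less favorable cross combination.
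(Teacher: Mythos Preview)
Your proposal is correct and is exactly the argument the paper gives: the paper's proof begins with the sentence ``The result follows by Lemma \ref{lemmacomvex} and Lemma \ref{lemmasubaddit}\ref{lemmasubaddit2}, taking once $a=v^+$ and once $a=v^-$,'' which is precisely your two-step derivation. The paper then adds, purely for illustration, an explicit construction of the laminate competitor in $\mathcal{A}^C$; this is optional and your abstract route already suffices.
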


 \begin{proof}
The result follows by Lemma \ref{lemmacomvex} and Lemma \ref{lemmasubaddit}\ref{lemmasubaddit2}, taking once $a=v^+$ and once $a=v^-$. For the sake of illustration we give here a self-contained, explicit construction.

We start by replicating the construction of  Lemma \ref{lemmacomvex}.
Again, we  first construct a curve $\gamma$ that will be interpreted as the jump set of a function $u$ satisfying the boundary conditions, and which 
consists of two subsets, with the normals
  $N^+/|N^+|$ and  $N^-/|N^-|$. Precisely, we
 define as in the proof of Lemma \ref{lemmacomvex}
  \begin{equation*}
    \gamma^+ = [0,1) (N^+)^\perp \hskip2mm\text{ and } \hskip2mm\gamma^- = 
(N^+)^\perp +[0,1)(N^-)^\perp\,,
  \end{equation*}
so that
  $\gamma^-$ ends at $(N^++N^-)^\perp=n^\perp$.
We set $\gamma = n^\perp \Z+(\gamma^+ \cup
  \gamma^-)$  and
  $L=|N^+|+|N^-|+1$. 
  Then $\R^2 \setminus \gamma$ has exactly two connected components and for $L$ sufficiently large
  one of them, call it $\omega^-$, contains $\{y\cdot n<-L\}$ and the other,
  call it $\omega^+$,   contains $\{y\cdot n>L\}$, see Figure \ref{figure:constructiongamma}.   We set $u=u^-+(u^+-u^-) \chi_{\omega^+}$.
  Then  $u$ fulfills the desired boundary conditions.

  Ideally the function $v$ should equal $v^+$ on $\omega^+ \cup \left(\gamma^++n^\perp\Z\right)$,  
  and $v^-$ on  $\omega^- \cup \left(\gamma^-+n^\perp\Z\right)$. 
As an approximation we construct a function $v_{\varepsilon}$ as follows  (cf. Figure \ref{figure:
    upperbound1dv}).
 Precisely,   for $\varepsilon\in(0,1)$, we set
  \begin{equation*}
    \omega^+_\varepsilon = \omega^+ \cup B_\varepsilon(\gamma^++n^\perp\Z)
\setminus B_\varepsilon(\gamma^-+n^\perp\Z)
  \end{equation*}
and $\tilde v_\varepsilon= v^-+ (v^+-v^-)
  \chi_{\omega^+_\varepsilon}$. It is easy to see that $|D\tilde
  v_\varepsilon|(Q_n)\le |v^+-v^-| (|N^+|+|N^-|+4\pi\varepsilon)$. 
  Let now $v_\varepsilon=\varphi_\varepsilon\ast \tilde v_\varepsilon\in
  C^\infty$, where $\varphi_\varepsilon\in C^\infty_c(B_\varepsilon)$ is a
  mollification kernel. Then (cf. Figure \ref{figure: upperbound1dv}) there exists a constant $c$, such that
  \begin{enumerate}
  \item $v_\varepsilon = v^-$ on $\gamma^-$; $\mathcal{H}^1(\gamma^+\setminus\{ v_\varepsilon = v^+\}) \le c\varepsilon$,
   \item $\int_{Q_n} \left|\nabla v_\varepsilon\right| \text{ d}y \le
 \left|[v]\right| \left( \left|N^+\right| + \left|N^-\right|
 + c \varepsilon\right) $,
   \item $v_\varepsilon=v^\pm$ on $\{\pm y\cdot n >L\}$.
  \end{enumerate}
Thus   $(u,v_\varepsilon) \in \mathcal{A}^C$ and 
a straightforward computation shows that
    \begin{align*}
   E[u,v_\varepsilon,Q_n] = &\left( {\alpha_u} \left|[u]\right| + {\alpha_v} \left|[v]\right|\right) \left( \left|N^+\right| + \left|N^-\right|+c\varepsilon \right) 
   \\ &+ {\alpha_F} \left|[u]\right| \left( \left|N^+ \cdot w^+\right| + \left|w^- \cdot N^-\right|+c|[v]|\varepsilon \right).
  \end{align*}
  Finally, we can take $\varepsilon$ arbitrarily small, which establishes the claim.
   
\begin{figure}[t]
\floatbox[{\capbeside\thisfloatsetup{capbesideposition={right,top},capbesidewidth=8cm}}]{figure}[\FBwidth]
{\caption{The picture shows the construction of $u$ and $v$. 
On the dark part $u$ takes the value $u^-$, on the grey part the value $u^+$.
The brown line is the part of the jump set of $u$ parallel to $N^-$ ($\gamma^-$), the blue line represents the part of $\gamma$ with normal parallel to $N^+$ ($\gamma^+$). 
The black line indicates up to smoothing the jump set of $\tilde{v}_{\varepsilon}$. 
As can be seen this is in terms of length and normal essentially the jump set of $u$.}\label{figure: upperbound1dv}}
{\centering
\begin{tikzpicture}[rotate = 300]
\draw[white] (0.5, 0.5) -- (1, -0.5); \fill[gray] (1,1) -- (0,0) -- (-2,2) -- (-1,3) -- (-0.9,2.7) -- (-0.5,1.5) -- (0.7,1.1) -- (1,1);
\fill[gray!50] (1,1) -- (2,2) -- (0,4) -- (-1,3) -- (-0.9,2.7) -- (-0.5,1.5) -- (0.7,1.1) -- (1,1);

\draw (0,0) -- (-2,2) -- (0,4) -- (2,2) -- (0,0);
\draw[red] (1, 1) -- (-0.5, 1.5);
\draw[blue] (-0.5,1.5)-- (-1,3);

\draw[black] (1.05 - 1.05/2 + 1.7/2 - 1.75/6, 1.7 - 1.75/3 + 1.05/2 - 1.7/2 + 1.75/6) -- (1.05,1.7 - 1.75/3) -- (-0.7, 1.7) -- (-1.025, 2.675) -- (-2 + 1.05 - 1.05/2 + 1.7/2 - 1.75/6, 2 +  1.7 - 1.75/3 + 1.05/2 - 1.7/2 + 1.75/6);
\draw[black,dotted,line width=0.04cm] (-2 + 1.05 - 1.05/2 + 1.7/2 - 1.75/6, 2 +  1.7 - 1.75/3 + 1.05/2 - 1.7/2 + 1.75/6) -- (-2 + 1.05 - 1.05/2 + 1.7/2 - 1.75/6 - 1, 2 +  1.7 - 1.75/3 + 1.05/2 - 1.7/2 + 1.75/6 +1);
\draw[->] (1,3) -- (3,5);
\draw(3,5) node[black, anchor=north]{$n$};
\draw(-1.5,2.5) node [black, anchor=west, rotate=75]{$\tilde{v}_{\varepsilon}=v^-$};
\draw(-1.55,1.55) node[black, anchor= south, rotate=75]{$u=u^-$};

\draw(-0.5,3.5) node[anchor=west, rotate=75]{$\tilde{v}_{\varepsilon}=v^+$};
\draw(0.45,3.55) node[black, anchor= north, rotate=75]{$u=u^+$};

\end{tikzpicture}
\vspace{3ex}}

\end{figure}
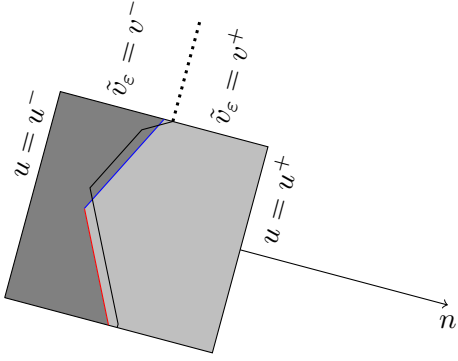  
 \end{proof}

In order to prove the lower bound in Theorem \ref{theorem:bounds} \ref{item:1dbounds} we need the following truncation lemma. 
As it is frequently the case in truncation, this can only be done if the relevant field (the velocity here) is scalar, and the result is therefore restricted to $d=1$.

\begin{lmm}[Truncation for $d=1$]\label{truncation1d}
Let $d=1$ and  $u^+,u^-,v^+,v^- \in \R$ be such that $v^- \leq v^+$. Let $n \in S^1$, $(u,v) \in \mathcal{A}^C$ and 
$
\overline{v}: Q_n \rightarrow \left[v^-,v^+\right]
$
be defined as 
\begin{equation*}
\overline{v}(y) = \begin{cases}
v(y) &\text{ if }v^- \leq v(y) \leq v^+\,, \\
v^+ &\text{ if } v^ +< v(y) \,, \\
v^- &\text{ if } v(y) < v^-.
\end{cases}
\end{equation*}
If $\left|[u]\right| \leq 2\frac{{\alpha_v}}{{\alpha_F}}$, then we have
\begin{equation}
{\alpha_v} \int_{Q_n}  \left| \nabla v\right| \text{ d}y + {\alpha_F} \int_{Q_n} w \cdot D
 u \geq {\alpha_v}\int_{Q_n}  \left| \nabla \overline{v} \right|\text{ d}y + {\alpha_F} 
\int_{Q_n} \overline{w} \cdot D u \,,
\end{equation}
where $w=(1,v)$ and $\overline w=(1,\overline v)$.
\end{lmm}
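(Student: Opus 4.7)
The plan is to split the double truncation $v\mapsto \overline v$ into an upper cut-off $\tilde v:=\min(v,v^+)$ followed by a lower cut-off, and to prove the energy inequality for each step. By the symmetry $y\mapsto -y$ (in the spirit of Lemma~\ref{lemma: invariance}), the lower step reduces to the upper one, so it suffices to establish
\begin{equation*}
\alpha_v \int_{Q_n} |\nabla v|\,\d y + \alpha_F \int_{Q_n} w\cdot Du \;\geq\; \alpha_v \int_{Q_n} |\nabla \tilde v|\,\d y + \alpha_F \int_{Q_n} \tilde w\cdot Du,
\end{equation*}
with $\tilde w:=(1,\tilde v)$. Writing $\mu:=(Du)_2$ for the spatial component of the vector measure $Du$, one has $w\cdot Du-\tilde w\cdot Du=(v-v^+)_+\,\mu$, so the difference between the two sides reduces to
\begin{equation*}
\alpha_v\int_{\{v>v^+\}}|\nabla v|\,\d y\;+\;\alpha_F\int_{Q_n}(v-v^+)_+\,d\mu.
\end{equation*}
The first summand is non-negative; the task is to show that it dominates the absolute value of the second.

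The main ingredient is a pointwise Gauss--Green bound on the superlevel sets $A_t:=\{v>t\}$. For $t>v^+$ the boundary conditions on $v$ imply that $A_t$ is compactly contained in $Q_n$ in the $n$-direction (since $v=v^\pm\leq v^+<t$ on $\{y\cdot n=\pm\tfrac12\}$), while $A_t$ is periodic in $m^1$; by Sard's theorem applied to the smooth function $v$, $\partial A_t=\{v=t\}$ is a smooth $1$-manifold for a.e.~$t$. Applying the $BV$ Gauss--Green formula (cf.~Theorem~3.36 of \cite{AmFuPa00}) to $u$ over $A_t$---the boundary contributions on $\partial Q_n$ canceling by periodicity in $m^1$ and by compact containment in the $n$-direction---yields
\begin{equation*}
\mu(A_t)\;=\;\int_{\partial^* A_t} u\,(\nu_{A_t})_2\,d\mathcal{H}^1,
\end{equation*}
where $\nu_{A_t}$ denotes the outer unit normal to $A_t$. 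Since the divergence of the constant vector field $e_2$ vanishes, $\int_{\partial^* A_t}(\nu_{A_t})_2\,d\mathcal{H}^1=0$, so one may replace $u$ by $u-c$ inside the integral for any constant $c$. The choice $c:=(u^++u^-)/2$ makes $|u-c|\leq|[u]|/2$ identically, giving
\begin{equation*}
|\mu(A_t)|\;\leq\;\tfrac{|[u]|}{2}\,P(A_t;Q_n)\qquad\text{for a.e.~}t>v^+.
\end{equation*}

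Combining this pointwise estimate with the layer-cake identity $\int_{Q_n}(v-v^+)_+\,d\mu=\int_{v^+}^{\infty}\mu(A_t)\,dt$ and the coarea formula $\int_{\{v>v^+\}}|\nabla v|\,\d y=\int_{v^+}^{\infty}P(A_t;Q_n)\,dt$, together with the hypothesis $\alpha_F|[u]|/2\leq\alpha_v$, we conclude
\begin{equation*}
\alpha_F\left|\int_{Q_n}(v-v^+)_+\,d\mu\right|\;\leq\;\tfrac{\alpha_F|[u]|}{2}\int_{v^+}^{\infty}P(A_t;Q_n)\,dt\;\leq\;\alpha_v\int_{\{v>v^+\}}|\nabla v|\,\d y,
\end{equation*}
which proves the inequality for the upper cut-off. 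The symmetric lower cut-off step then completes the proof. The principal technical hurdle will be executing the Gauss--Green step cleanly under the mixed boundary conditions on $Q_n$ (compact containment in the $n$-direction, periodicity in $m^1$) and handling the Sard-type regularity of the level sets of $v$ for almost every $t$; the centering of $u$ by $(u^++u^-)/2$, which precisely produces the factor $\tfrac12$ that pairs with the hypothesis $|[u]|\leq 2\alpha_v/\alpha_F$, is the decisive analytical move.
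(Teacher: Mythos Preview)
Your proof is correct, but it takes a genuinely different route from the paper's. The paper works with the full truncation $v\mapsto\overline v$ in one shot and applies Gauss--Green on the two level sets of $u$, namely $Q_n^\pm=\{u=u^\pm\}$: writing $\int_\gamma(\overline w-w)\cdot n^\gamma\,d\mathcal H^1=\int_{Q_n^-}\operatorname{div}(\overline w-w)\,dy=-\int_{Q_n^+}\operatorname{div}(\overline w-w)\,dy$ and averaging the two expressions produces the factor $\tfrac12$, after which $|\operatorname{div}(\overline w-w)|\le|\nabla v-\nabla\overline v|$ and the pointwise identity $|\nabla v|-|\nabla\overline v|=|\nabla v-\nabla\overline v|$ finish the job. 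You instead slice along the level sets of $v$: splitting into an upper and a lower cut-off, you control $\mu(A_t)$ for each superlevel set $A_t=\{v>t\}$ via the BV Gauss--Green formula, and the factor $\tfrac12$ emerges from centering $u$ at $(u^++u^-)/2$. Both arguments hinge on $u\in\{u^+,u^-\}$; the paper's is shorter and avoids layer-cake/coarea, while yours gives a pleasant level-by-level picture of where the constant $2\alpha_v/\alpha_F$ comes from.

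One small point: the reduction of the lower cut-off to the upper one ``by the symmetry $y\mapsto-y$'' is not quite accurate as stated (that map alone swaps boundary faces but does not turn a lower truncation into an upper one). The clean fix is simply to rerun the identical argument with sublevel sets $\{\,\tilde v<t\,\}$ for $t<v^-$; since $\tilde v=v^\pm\ge v^->t$ on $\{y\cdot n=\pm\tfrac12\}$, these sets are again compactly contained in the $n$-direction, and because $\{\tilde v<t\}=\{v<t\}$ for $t<v^-$ you can still invoke Sard for the smooth $v$.
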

\begin{proof}
 Let $Q_n^- = \left\{ y\in Q_n:u(y) = u^- \right\}$, $\gamma = \left(\del Q_n^-\right) \cap Q_n$ and $n^{\gamma}$ be the outer normal to $Q_n^-$ on $\gamma$, 
so that $Du=[u]\otimes n^\gamma d\mathcal{H}^1\LL \gamma$.
We want to estimate the term
\begin{equation*}
{\alpha_F}    \int_{Q_n}  \left(\overline w - {w}\right) \cdot D u = 
 {\alpha_F} [u] \int_{\gamma} \left(\overline w - {w}\right) \cdot n^{\gamma} \text{ d}\mathcal{H}^{1} \,.
\end{equation*}
Since $\overline w- w$ vanishes on $\{y\cdot n=-1/2\}$ and it is periodic in the  direction $n^\perp$ we have that 
$\int_{\partial Q_n^-\setminus\gamma} (\overline w- w)\cdot \nu 
\text{ d}\mathcal{H}^d=0$ (here $\nu$ is the outer normal to $Q_n^-$), and the same on $Q_n^+=Q_n\setminus Q_n^-$. Therefore
by the Gauss-Green formula
\begin{equation*}
\int_{\gamma} \left(\overline w - {w}\right) \cdot n^{\gamma} \text{ d}\mathcal{H}^{1}     = \int_{Q_n^-} \operatorname{div} (\overline w- w) \text{ d}y 
=-\int_{Q_n^+} \operatorname{div} (\overline w- w) \text{ d}y \,.
\end{equation*}
Estimating $|\operatorname{div} w|\le |\nabla v|$ and averaging over the two sides of $\gamma$ we obtain
\begin{align*}
 {\alpha_F}    \int_{Q_n} \left(\overline w - {w}\right) \cdot D u 
 &\leq {\alpha_F} \frac{\left|[u]\right|}{2} \int_{Q_n} \left| \operatorname{div}\left( \overline w - {w}\right) \right| \text{ d}y 
 \leq {\alpha_F} \frac{\left|[u]\right|}{2} \int_{Q_n} \left|\nabla \overline v - \nabla {v}\right| \text{ d}y \\
 &= {\alpha_F} \frac{\left|[u]\right|}{2} \left(\int_{Q_n} \left|\nabla v\right| \text{ d}y - \int_{Q_n} \left|\nabla \overline{v}\right| \text{ d}y\right) \\
 &\leq {\alpha_v} \int_{Q_n} \left|\nabla v\right| \text{ d}y - {\alpha_v} \int_{Q_n} \left|\nabla \overline{v}\right| \text{ d}y\,,
\end{align*}
where we used that locally either $v=\overline v$ and $\nabla v=\nabla\overline v$, or $\nabla \overline v=0$. This concludes the proof.
\end{proof}

At this point we present the key lower bound in the one dimensional case. The main idea is to reduce the energy to an integral on  the jump set of $u$ and separate it into different  parts, in order to show that the optimal curve has a structure similar
to the one used for the construction in the upper bound and illustrated in Figure \ref{figure:constructiongamma}. 
For this we start from the reduction to intensity maps $u$ which are characteristic functions, done in Lemma \ref{lem:coarea}, 
and the truncation of $v$, discussed in Lemma \ref{truncation1d}. 
Then we show that we can consider also functions $v$ which take only two values, and use this to subdivide  $\gamma$ 
into two subsets, a strategy similar to
the one used 
in \cite[Lemma 4.5]{ContiGarroniMassaccesi} to obtain 
the relaxation of a line-tension model for dislocations in crystals. 
\begin{prpstn}[Lower bound for $d=1$]
\label{proposition:lowerboundsamesigns1d}
Let $d=1$, $u^+,u^-,v^+,v^- \in \R$, $n \in S^1$ be such that $\left|[u]\right| \leq 2\frac{{\alpha_v}}{{\alpha_F}}$. 
Then $K(u^+,u^-,v^+,v^-,n) \geq \min_{N^+ \in \R^2} \underbar{K}(N^+)$ where
\begin{equation*}
\underline{K}\left(N^+\right) = \left({\alpha_u} \left|[u]\right| + {\alpha_v} \left|[v]\right|\right)\left( \left|N^+\right| + \left|n - N^+\right| \right) 
+ {\alpha_F} \left|[u]\right| \left|N^+ \cdot w^+ + \left(n - N^+\right) \cdot w^-\right|.
\end{equation*}
\end{prpstn}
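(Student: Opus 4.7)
The plan is to reduce the problem, in three successive steps, to a very constrained class of competitors in which a good choice of parameters $N^\pm$ can be made directly, and then exploit a conservation law satisfied by the jump set of $u$.

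\textbf{Step 1 (Reduction of admissible $(u,v)$).} Start from $(u,v)\in\mathcal{A}^C$, which is justified by Lemma \ref{lem:coarea}: $u$ takes only the values $u^\pm$, and $v$ is smooth. Then apply Lemma \ref{truncation1d} to assume $v^-\le v\le v^+$ (here the hypothesis $|[u]|\le 2\alpha_v/\alpha_F$ enters for the first time, ensuring that truncation does not increase the energy). Write $u=u^-+[u]\chi_\Omega$, set $\gamma=\partial^*\Omega\cap Q_n$ and let $n^\gamma$ point from $\{u=u^-\}$ to $\{u=u^+\}$. Using the Dirichlet boundary conditions on $\{y\cdot n=\pm 1/2\}$ and the periodicity in the $n^\perp$ direction, the Gauss--Green formula applied to $\Omega$ with constant test fields gives the key conservation law
\[
\int_\gamma n^\gamma\,d\mathcal{H}^1 = n.
\]

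\textbf{Step 2 (Choice of $N^+$ via a level set of $v$).} For every regular level $t\in(v^-,v^+)$ define
\[
N^+(t) := \int_{\gamma\cap\{v>t\}} n^\gamma\,d\mathcal{H}^1,\qquad N^-(t) := n-N^+(t) = \int_{\gamma\cap\{v\le t\}} n^\gamma\,d\mathcal{H}^1.
\]
The triangle inequality gives $|N^+(t)|+|N^-(t)|\le\mathcal{H}^1(\gamma)$, so the $\alpha_u|[u]|$ part of $\underline K(N^+(t))$ is immediately controlled by the $\alpha_u|[u]|\mathcal{H}^1(\gamma)$ part of $E$. A further triangle inequality applied to the vector-valued measure on $\gamma$ yields
\[
|N^+(t)\cdot w^+ + N^-(t)\cdot w^-| \le \int_{\gamma\cap\{v>t\}}|w^+\!\cdot n^\gamma| + \int_{\gamma\cap\{v\le t\}}|w^-\!\cdot n^\gamma|\,d\mathcal{H}^1.
\]

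\textbf{Step 3 (Averaging over $t$, and the absorption).} The goal is to pick $t^*$ so that $\underline K(N^+(t^*))\le E[u,v,Q_n]$. By the coarea formula,
\[
\int_{v^-}^{v^+}\mathcal{H}^1(\{v=t\})\,dt = \int_{Q_n}|\nabla v|\,dy,
\]
which lets the $\alpha_v|[v]|(|N^+(t)|+|N^-(t)|)$ contribution be matched, on average, by $\alpha_v\int|\nabla v|$. For the $\alpha_F$ contribution, averaging the inequality of Step 2 over $t\in(v^-,v^+)$ and Fubini give a bound by $\int_\gamma\big(\lambda|w^+\!\cdot n^\gamma|+(1-\lambda)|w^-\!\cdot n^\gamma|\big)\,d\mathcal{H}^1$ with $\lambda=(v-v^-)/[v]$. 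By the convexity of $|\cdot|$ this majorises $\int_\gamma|w\cdot n^\gamma|$, producing an ``excess'' precisely when $w^+\cdot n^\gamma$ and $w^-\cdot n^\gamma$ have opposite signs. This excess must be absorbed into the $\alpha_v|\nabla v|$ term; the mechanism is the Gauss--Green identity applied to the level sets $\{v>t\}\cap\Omega$, which expresses $N^+(t)\cdot w^+ + N^-(t)\cdot w^-$ directly as a boundary integral on $\{v=t\}\cap\Omega$ weighted by $[v]$, giving a telescoping estimate.

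\textbf{Main obstacle.} The delicate point is the absorption in Step 3: the two convex envelopes produced by the triangle inequalities go in opposite directions to the ones needed, and one must show that the resulting slack between $\int_\gamma|w\cdot n^\gamma|$ and $\fint|N^+(t)\cdot w^+ + N^-(t)\cdot w^-|\,dt$ is bounded by $\tfrac{\alpha_v}{\alpha_F|[u]|}\int|\nabla v|$. This is exactly where the hypothesis $|[u]|\le 2\alpha_v/\alpha_F$ is used, in complete parallel to its role in Lemma \ref{truncation1d}; the argument is in the same spirit as the two-valued reduction performed in \cite{ContiGarroniMassaccesi} for the dislocation line-tension relaxation, where a 1D scalar truncation is used to exchange smooth profiles with piecewise constant ones at no energy cost. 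Once $t^*$ is produced, choosing $N^+=N^+(t^*)$ yields $\underline K(N^+)\le E[u,v,Q_n]$, which combined with the reductions of Steps 1--2 gives the lower bound on $K$.
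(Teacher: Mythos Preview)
Your reduction in Step~1 is correct and matches the paper. The difficulty lies in Step~3, where the argument is only sketched and the crucial ``absorption'' is never carried out. Two specific issues:

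\textbf{The $\alpha_v$ matching.} You invoke the coarea formula $\int|\nabla v|=\int\mathcal{H}^1(\{v=t\})\,dt$ to control the average of $\alpha_v|[v]|\,(|N^+(t)|+|N^-(t)|)$. But $N^\pm(t)$ are integrals over pieces of $\gamma=J_u$, not level sets of $v$; the coarea formula alone gives no link between them. One \emph{can} obtain such a link, but it requires the Gauss--Green identity on $\Omega\cap\{v>t\}$ and $\Omega^c\cap\{v>t\}$ separately (using the boundary conditions), which yields
\[
|N^+(t)|\le \mathcal{H}^1(\{v=t\}\cap\Omega^c),\qquad |N^-(t)|\le \mathcal{H}^1(\{v=t\}\cap\Omega),
\]
and only then does coarea close the estimate. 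You have not written this, and without it the step is a gap.

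\textbf{The $\alpha_F$ absorption.} Here the problem is more serious. The average over $t$ of $N^+(t)\cdot w^++N^-(t)\cdot w^-$ equals $\int_\gamma w\cdot n^\gamma$, so Jensen gives
\[
\Bigl|\int_\gamma w\cdot n^\gamma\Bigr|\ \le\ \frac{1}{|[v]|}\int_{v^-}^{v^+}\bigl|N^+(t)\cdot w^++N^-(t)\cdot w^-\bigr|\,dt,
\]
which is the \emph{wrong} direction for your purpose. You assert that the resulting excess can be absorbed into the slack of the $\alpha_v$ estimate, invoking $|[u]|\le 2\alpha_v/\alpha_F$ a second time, but you do not show how, and there is no telescoping identity that delivers this. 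Without a concrete bound here the proof does not close.

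The paper avoids the whole averaging mechanism. After truncation it applies a \emph{single} convexity step,
\[
\alpha_v\int_{Q_n}|\nabla v|\ \ge\ \alpha_v\Bigl|\int_{Q_n^-}\nabla v\Bigr|+\alpha_v\Bigl|\int_{Q_n^+}\nabla v\Bigr|,\qquad
\alpha_F|[u]|\int_\gamma|w\cdot n^\gamma|\ \ge\ \alpha_F|[u]|\Bigl|\int_\gamma w\cdot n^\gamma\Bigr|,
\]
and then observes via Gauss--Green that the right-hand side depends only on the trace $v|_\gamma$. This reduces the problem to minimising an explicit, weakly continuous, affine-in-$\tilde v$ functional $G$ over $\tilde v\in L^\infty(\gamma;[v^-,v^+])$. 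Weak density of two-valued functions then lets one take $v_*\in\{v^-,v^+\}$ on $\gamma$; setting $N^\pm=\int_{\gamma^\pm}n^\gamma$ (with $\gamma^\pm=\{v_*=v^\pm\}$) gives $G[v_*]=\underline K(N^+)$ directly, with no averaging and no second use of the hypothesis $|[u]|\le 2\alpha_v/\alpha_F$. That hypothesis is used \emph{only} in the truncation lemma.

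In short: your level-set idea is in the right spirit, but the proof as written has a genuine gap in Step~3, and the paper's route via the trace functional $G$ and weak two-valued approximation is both simpler and complete.
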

Remark. If $\left(w^- \cdot n\right) \left(w^+ \cdot n\right)\le 0$ then the minimizer $N^+_*$ belongs to the segment $[0,n]$ and makes the last term vanish, the bound reduces to  $K\ge \underline{K}\left(N^+_*\right) = \left({\alpha_u} \left|[u]\right| + {\alpha_v} \left|[v]\right|\right)$.
\begin{proof}
By Lemma \ref{lemma: invariance} we can assume without loss of generality
that $v^- \leq v^+$.  Let $(u,v) \in \mathcal{A}^C$. 
By Lemma \ref{truncation1d} we may further assume that $v^-\le v\le v^+$.
 We write $u = u^+ - [u] \chi_{Q_n^-}$ for a set $Q_n^- \subset Q_n$ of finite perimeter.
 Using the abbreviation $\gamma = \left(\del Q_n^-\right) \cap Q_n$ and $n^{\gamma}$ for the outer normal to $Q_n^-$ on $\gamma$ we have as in the proof of Lemma \ref{truncation1d}
$Du=[u] n^\gamma \mathcal{H}^1\LL\gamma$ and
\begin{equation*}
E[u,v,Q_n] = {\alpha_u}\int_{Q_n}  \left| D u \right| + {\alpha_v} \int_{Q_n} \left| \nabla v\right| \d y
+ {\alpha_F} \int_{Q_n} \left| w \cdot D u\right|   \,.
\end{equation*}
We estimate 
\begin{equation*}
E[u,v,Q_n]\geq {\alpha_u} \left|[u]\right| \mathcal{H}^1(\gamma) 
+ {\alpha_v}  \left| \int_{Q_n^-}\nabla v \text{ d}y \right|
+ {\alpha_v}  \left| \int_{Q_n^+}\nabla v \text{ d}y \right|
+ {\alpha_F} \left|[u]\right|  \, \left|\int_{\gamma} w \cdot n^{\gamma} \text{ d}\mathcal{H}^1 \right|. 
\end{equation*}
This is the key estimate to establish the lower bound, and it is sharp if the three integrands have a constant orientation on the respective domains.

We observe that the quantity on the right-hand side depends only on the value of $v$ on $\gamma$. Indeed, since $v$ is periodic in the direction orthogonal to $n$
and $\int_{\gamma} n^{\gamma} \text{ d}\mathcal{H}^1 = n$,
\begin{equation*}
 \int_{Q_n^-} \nabla v \d y
= \int_\gamma v n^\gamma \d\mathcal{H}^1- v^- n
= \int_\gamma (v-v^-) n^\gamma \d\mathcal{H}^1
\end{equation*}
and the same on $Q_n^+$. 
We therefore define $G:L^\infty(\gamma,[v^-,v^+])\to\R$ by
\begin{equation*}
G[\tilde v]=
  {\alpha_v} \left| \int_\gamma (\tilde v-v^-) n^\gamma \d\mathcal{H}^1\right| 
+  {\alpha_v} \left| \int_\gamma (\tilde v-v^+) n^\gamma \d\mathcal{H}^1\right| 
+{\alpha_F} |[u]|  \left| n_0+
\int_\gamma\tilde  v n_1^\gamma \d\mathcal{H}^1\right| \,,
\end{equation*}
so that (we recall that $n_0$ is the time component, $n_1$ the space component)
\begin{equation*}
  E[u,v,Q_n]\ge {\alpha_u} |[u]|\mathcal{H}^1(\gamma)+\min G.
\end{equation*}

Existence of a minimizer 
in $L^\infty(\gamma,[v^-,v^+])$ follows immediately from the
convexity of the functional  $G$. Even more, 
the functional  $G$ is weakly continuous on $L^\infty(\gamma)$, 
and therefore for any $\varepsilon>0$ 
there is $v_*\in L^\infty(\gamma,\{v^-,v^+\})$
such that $G(v_*)\le \min G + \varepsilon$. This permits 
us to reduce to the situation where also the velocity $v$ 
takes only two values.

We split $\gamma$ into two subsets depending on the value of $v_*$:
\begin{equation*}
\gamma^{-} =\left\{y \in \gamma: v_*(y) =v^-\right\} \quad \text{and} \quad
\gamma^{+} =\left\{y \in \gamma: v_*(y)=v^+ \right\},
\end{equation*}
and define 
\begin{equation*}
N^- = \int_{\gamma^-} n^{\gamma} \text{ d}\mathcal{H}^1
\quad \text{and} \quad
N^+ = \int_{\gamma^+} n^{\gamma} \text{ d}\mathcal{H}^1\,.
\end{equation*}
These quantities characterize the effective orientation of $\gamma^+$ and $\gamma^-$, corresponding to  the two normals in
 Proposition  \ref{propupperbound1d}  and Lemma \ref{lemmacomvex}. Since $v_*$ is constant in each of the  
 two subsets of $\gamma$, by convexity one could assume that $n^\gamma$ is also constant on each of them, and then rearrange so that $\gamma$ is composed of two segments. 
This illustrates the idea behind the energy bound, however it is not needed to conclude the proof.

It suffices indeed to separate the integrals in the definition of $G$ into a $\gamma^+$ and a $\gamma^-$ part,
then a short computation gives
\begin{alignat*}1
  G[v_*]&={\alpha_v} |[v]| (|N^+|+|N^-|) + {\alpha_F} |[u]|
| n_0 + v^+ N^+_1 + v^- N^-_1|\\
&={\alpha_v} |[v]| (|N^+|+|N^-|) + {\alpha_F} |[u]|
| w^+\cdot N^++ w^-\cdot N^-|\,,
\end{alignat*}
where  $w^\pm=(1,v^\pm)$. Since 
$\mathcal{H}^1(\gamma)\ge |N^+|+|N^-|$ and  by periodicity
 $N^++N^-=n$ this gives
\begin{equation*}
E[u,v,Q_n]\ge  \underline{K}\left(N^+\right) -\varepsilon
\ge \min_{N^+ \in \R^2}\underline{K}\left(N^+\right) -\varepsilon
\end{equation*}
for all $(u,v)\in\mathcal{A}^C$ and  $\varepsilon>0$, which concludes the proof.
 \end{proof}

\subsection{Proofs of Theorem \ref{theorem:bounds} and \ref{theorem:sufficient}}\label{sec:thmproofs}

At first we prove a lower bound in the higher dimensional case.
\begin{prpstn}[Lower bound for $d\geq1$]\label{proposition:lowerbounddge1}
For $(u,v)\in \mathcal{A}^C$ let us define
 $Q_n^- = \left\{y \in Q_n: u(y) = u^-\right\}$ and  
$W = (W)_{\substack{i=1,\dots,d \\ j =0,\dots,d}}= \int_{J_u} v\otimes \nu \text{ d}\mathcal{H}^d\in\R^{d\times(d+1)}$ 
with $\nu$ denoting the outer normal of $Q_n^-$ on $J_u$. Then
$$
E[u,v,Q_n] \geq{\alpha_F} \left|[u]\right| n_0
 + {\alpha_u} \left|[u]\right| + f(W)\,,
$$
where $ f(W) = {\alpha_F} \left|[u]\right| \sum_{k=1}^d W_{kk} + {\alpha_v} \left( \left| v^- \otimes n - W\right| + \left| v^+\otimes n - W\right|\right)\,.$
\end{prpstn}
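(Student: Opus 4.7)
The plan is to split $E[u,v,Q_n]$ into its three additive pieces $\alpha_u\int|\nabla u|\d y$, $\alpha_F\int|w\cdot\nabla u|\d y$ and $\alpha_v\int|\nabla v|\d y$ and to lower--bound each separately, using that $(u,v)\in\mathcal{A}^C$ means $u\in BV(Q_n;\{u^+,u^-\})$ has $Du=[u]\,\nu\,\mathcal{H}^d\LL J_u$ concentrated on a codimension--one set, while $v$ (hence $w=(1,v)$) is continuous across $J_u$. The single tool that drives everything is the Gauss--Green formula applied to $Q_n^\pm$, combined with the periodicity of $u$ and $v$ in the $m^i$-directions, which cancels all lateral boundary contributions.

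I would first treat the two gradient--of--$u$ terms together. Continuity of $w$ on $J_u$ gives $\int_{Q_n}|w\cdot Du|=|[u]|\int_{J_u}|w\cdot\nu|\d\mathcal{H}^d$ and $\int_{Q_n}|Du|=|[u]|\mathcal{H}^d(J_u)$. Applying Gauss--Green to a constant field on $Q_n^-$, whose boundary consists of $J_u$ (outer normal $\nu$), of $\{y\cdot n=-1/2\}$ (outer normal $-n$, where $u=u^-$), and of lateral periodic faces that cancel pairwise, yields $\int_{J_u}\nu\d\mathcal{H}^d=n$, so $\mathcal{H}^d(J_u)\ge|n|=1$. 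Expanding $w\cdot\nu=\nu_0+\sum_k v_k\nu_k$ and recognising $W_{kk}=\int_{J_u}v_k\nu_k\d\mathcal{H}^d$ gives $\int_{J_u}w\cdot\nu\d\mathcal{H}^d=n_0+\sum_{k=1}^d W_{kk}$. The scalar inequality $\int|f|\ge|\int f|\ge\int f$ then produces
\begin{equation*}
\alpha_u\!\int\!|Du|+\alpha_F\!\int\!|w\cdot Du|\;\ge\;\alpha_u|[u]|+\alpha_F|[u]|\Bigl(n_0+\sum_{k=1}^d W_{kk}\Bigr).
\end{equation*}

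For the velocity term I would split $Q_n=Q_n^+\cup Q_n^-$ and compute the vector--valued mean of $\nabla v$ on each piece by Gauss--Green. On $Q_n^+$ the outer normal is $-\nu$ on $J_u$ and $n$ on $\{y\cdot n=1/2\}$ (where $v=v^+$), while the lateral periodic boundary contributes zero; hence $\int_{Q_n^+}\nabla v\d y=v^+\otimes n-W$. Symmetrically, on $Q_n^-$ the outer normal is $\nu$ on $J_u$ and $-n$ on $\{y\cdot n=-1/2\}$ (where $v=v^-$), producing $\int_{Q_n^-}\nabla v\d y=W-v^-\otimes n$. The triangle inequality $\int|\nabla v|\ge|\int\nabla v|$ applied on each half and summed then gives
\begin{equation*}
\alpha_v\!\int_{Q_n}\!|\nabla v|\d y\;\ge\;\alpha_v\bigl(|v^+\otimes n-W|+|v^-\otimes n-W|\bigr),
\end{equation*}
and adding this to the previous estimate yields exactly the claimed lower bound.

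The only real difficulty here is bookkeeping: one must track the sign convention of $\nu$ as the outer normal of $Q_n^-$ so that its contribution to $\partial Q_n^+$ flips sign, and one must verify in each Gauss--Green application that the periodic side faces and the flat boundaries $\{y\cdot n=\pm 1/2\}$ contribute the stated vectors. No analytic ingredient beyond Gauss--Green and the inequality $|\int f|\le\int|f|$ is needed, which is exactly what makes this proposition the natural higher--dimensional counterpart of the one--dimensional lower bound in Proposition \ref{proposition:lowerboundsamesigns1d}.
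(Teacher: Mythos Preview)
Your proposal is correct and follows essentially the same route as the paper's proof: drop the absolute value in the $\alpha_F$-term via $\int|w\cdot Du|\ge|[u]|\int_{J_u}w\cdot\nu$, use $\mathcal{H}^d(J_u)\ge 1$ for the $\alpha_u$-term, split $\int_{Q_n}|\nabla v|$ over $Q_n^\pm$ and apply Gauss--Green on each piece to identify the integrals with $v^\pm\otimes n-W$. The paper merely compresses these steps by writing ``repeating the first steps in the proof of Proposition~\ref{proposition:lowerboundsamesigns1d}'' and then displaying the chain of inequalities, whereas you spell out the boundary bookkeeping explicitly; the content is the same.
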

\begin{proof}
Repeating the first steps in the proof of Proposition
\ref{proposition:lowerboundsamesigns1d} one verifies  
\begin{align*}
 E[u,v,Q_n] &\geq {\alpha_F} \left|[u]\right| \int_{J_u} w \cdot \nu \text{ d}\mathcal{H}^d + {\alpha_v} \int_{Q_n}  \left| \nabla v \right| \text{ d}y + {\alpha_u} \left|[u]\right| 
 \mathcal{H}^d\left(J_u\right) \\
 &\geq {\alpha_F} \left|[u]\right| \int_{J_u} w \cdot \nu \text{ d}\mathcal{H}^d + {\alpha_v}  \left|\int_{Q^-_n} \nabla v \text{ d}y\right| + {\alpha_v} 
 \left|\int_{Q^+_n} \nabla v \text{ d}y\right| + {\alpha_u} \left|[u]\right| 
\\
 &= {\alpha_F} \left|[u]\right| \left( n_0 + \sum_{k=1}^d W_{kk}\right) + {\alpha_v} \left( \left| v^- \otimes n - W\right| + \left| v^+\otimes n - W\right|\right) + {\alpha_u} \left|[u]\right| \\
 &= {\alpha_F} \left|[u]\right| n_0 + {\alpha_u} \left|[u]\right| + f(W)\,.
 \end{align*}
Since we do not have a sharp truncation result in this case the 
following steps in the proof of Proposition \ref{proposition:lowerboundsamesigns1d} based on the functional $G$ do not extend to this situation.
\end{proof}

Now we proceed with the proofs of the main theorems.
\begin{proof}[Proof of Theorem \ref{theorem:bounds}] 
 First we treat the  case \ref{item:ofcsatisfied}, where the optical flow constraint can be satisfied locally.

 To prove the upper bound we first choose  $t\in [0,1]$ such that $(1,v^-t+v^+(1-t))\cdot n=0$. The upper bound follows then from Lemma \ref{lemmasubaddit}\ref{lemmasubaddit2}, taking $a=v^-t+v^+(1-t)$.
For the sake of illustration we give also an explicit construction here.  
  Let $\varphi:\R\to\R$ be a smooth monotone function such that 
  $\varphi(-\frac12)=0$,  $\varphi(0)=t$  and $\varphi(\frac12)=1$. 
  We set
  \begin{equation}
    u(y)=u^-+(u^+-u^-)\chi_{y\cdot n<0} \text{ and }
    v(y)=v^-+(v^+-v^-)\varphi(y\cdot n)
  \end{equation}
 and observe that  $(u,v) \in \mathcal{A}^C$. 
 A simple computation shows that  $K \leq {\alpha_u} \left|[u]\right| + {\alpha_v} \left|[v]\right|$.

To prove the lower bound we
 choose $(u,v) \in \mathcal{A}^C$ and verify using 
convexity and periodicity
\begin{equation*}
E[u,v,Q_n] \geq {\alpha_u} \left|[u] \right| + {\alpha_v} \left|[v] \otimes n\right| = {\alpha_u} \left|[u]\right| + {\alpha_v} \left|[v]\right|.
\end{equation*}
This concludes the proof of  \ref{item:ofcsatisfied}.

The upper bound in \ref{item:1dbounds}  
follows immediately from the construction
in Proposition \ref{propupperbound1d}.
The lower bound in \ref{item:1dbounds} was proven in  Proposition 
\ref{proposition:lowerboundsamesigns1d}.

Finally, to prove \ref{item:ddbounds} we observe that 
  Lemma \ref{lemmacomvex}  shows that
 if $n=\sum_{j=1}^l N^j\in S^d$ then
\begin{equation*}
K(u^+,u^-,v^+,v^-,n)=
  h(\sum_{j=1}^l N^j) \le 
\sum_{j=1}^l  h( N^j) = 
\sum_{j=1}^l  |N^j| K(u^+,u^-,v^+,v^-,\frac{N^j}{|N^j|})\,.
\end{equation*}
Using the bound from  Lemma \ref{lemmasubaddit}\ref{lemmasubaddit2}
in each term in the sum gives the assertion.
\end{proof}

\begin{proof}[Proof of Theorem \ref{theorem:sufficient}] 

\ref{theorem:sufficientit1}: The  first assertion
immediately follows from Theorem \ref{theorem:bounds} \ref{item:ofcsatisfied}
and its proof. 

 \ref{item:theoremsufficient1d}: 
By Proposition \ref{proposition:lowerboundsamesigns1d} we have 
$K\ge \min\underbar{K}(\R^2)$, where
\begin{equation*}
\underbar{K}(N^+)= \left({\alpha_u} \left|[u]\right| + {\alpha_v} \left|[v]\right| \right)  \left( \left|N^+\right|+ \left| n-N^+\right| \right) + 
{\alpha_F} \left|[u]\right| \, | w^-\cdot n + [v]N^+_1| \,.
\end{equation*} 
We first show that if  $[v](w^-\cdot n)n_1> 0$ then
$N^+=0$ is a minimizer of $\underbar{K}$.
Since $N^+\mapsto |n-N^+|$ is convex, it lies above its tangent,
$|n-N^+|\ge 1-n\cdot N^+$. Using $\left|w^- \cdot n + [v]N_1^+\right| \geq \left| w^- \cdot n\right| + \operatorname{sign}(w^-\cdot n) [v] N_1^+$ for the last term we obtain 
\begin{alignat*}1
  \underbar{K}(N^+)-  \underbar{K}(0)&\ge  \left({\alpha_u} \left|[u]\right| + {\alpha_v} \left|[v]\right| \right)  
\left( \left|N^+\right|- n\cdot N^+ \right) + 
{\alpha_F} \left|[u]\right| \operatorname{sign}(w^-\cdot n) [v] N^+_1\,.
\end{alignat*}
Using the fact that $[v](w^-\cdot n)n_1$ is positive we obtain
\begin{equation*}
 \frac{\underbar{K}(N^+)-  \underbar{K}(0)}
{{\alpha_u} |[u]| + {\alpha_v}  |[v]|}
\ge  \left|N^+\right|- n\cdot N^+ + \xi n_1 N^+_1\,,
\end{equation*}
where
\begin{equation*}
\xi=\frac{{\alpha_F} |[u]|  |[v]|}{({\alpha_u} |[u]| + {\alpha_v}  |[v]|) |n_1|}\,.
\end{equation*}
Since by assumption $\xi\in[0,2]$ we have
\begin{equation*}
n\cdot N^+-\xi n_1N^+_1=(1-\xi) n_1N^+_1+n_0N^+_0\le |n|\,|N^+|=|N^+|\,,
\end{equation*}
therefore $\underbar{K}(N^+)\ge  \underbar{K}(0)$.

Recalling the upper bound we have $\underline K(0)\le K\le \overline K(0)$,
and since  $\overline K(0)=\underline K(0)$ we conclude that
in the case $[v](w^-\cdot n)n_1> 0$
one has $K=\overline K(0)= {\alpha_u} \left|[u]\right| + {\alpha_v} \left|[v]\right|   +{\alpha_F} \left|[u]\right| |w^-\cdot n|$,
with a simple profile since we have set $N^+=0$ in the construction of the upper bound. The same result still holds
if $[v]=0$, as a simple inspection of the lower bound shows.

Swapping $(u^+,v^+)$ with $(u^-,v^-)$ and using  Lemma \ref{lemma: invariance} we obtain that
if   $[v](w^+\cdot n)n_1< 0$ then $K=\overline K(n)= {\alpha_u} \left|[u]\right| + {\alpha_v} \left|[v]\right|   +{\alpha_F} \left|[u]\right| |w^+\cdot n|\,, $
again with a simple profile. For $[v]=0$ the two assertions coincide.
Since for  $n_1=0$ our assumption $\xi\le2$ is only satisfied in the trivial 
case $[u]=0$ and $[v]=0$, we have shown that if  $w^+\cdot n$ and $w^-\cdot n$ are nonzero and have the same sign 
the profile is simple and the interfacial energy is either $\overline K(0)$ or 
$\overline K(n)$; since they are both admissible the energy necessarily  
is  the minimum of the two.

\ref{item:theoremsufficient}: 
At first we observe that Lemma \ref{lemmasubaddit} \ref{lemmasubaddit2} implies that $\alpha_F \min\{ |w^- \cdot n|, |w^+ \cdot n| \} + \alpha_v |[v]| + \alpha_u |[u]|$ is an upper bound.
Swapping $n$ with $-n$ we can assume that $w^+\cdot n$ and $w^-\cdot n$ are strictly positive,
swapping $(u^+,v^+)$ with $(u^-,v^-)$ we can assume that
\begin{equation*}
  0<w^-\cdot n\le w^+\cdot n\,.
\end{equation*}
From Proposition \ref{proposition:lowerbounddge1} we know that 
$E[u,v,Q_n] \geq {\alpha_F} \left|[u]\right| n_0 + {\alpha_u} \left|[u]\right| + f(W)$ with 
\begin{equation*}
 f(W) = {\alpha_F} \left|[u]\right| \sum_{k=1}^d W_{kk} + {\alpha_v} \left( \left| v^- \otimes n - W\right| + \left| v^+\otimes n - W\right|\right)\,.
\end{equation*}
Notice that $$f\left(v^- \otimes n\right) = {\alpha_F} \left|[u]\right|\sum_{k=1}^d v^-_k n_k + {\alpha_v} \left|[v]\right|\,.$$
 As $w^+\cdot n \ge w^-\cdot n\ge 0$
it suffices to show that $f$ has a minimum at $v^- \otimes n$. Since $f$ is convex this is the case if and only if $0$ is a subgradient at $v^- \otimes n$.
 It can be easily seen that the set of subgradients of $f$ at $v^- \otimes n$ is
 \begin{equation*}
  \partial f\left(v^- \otimes n\right) = {\alpha_F} \left|[u]\right| \left(\delta_{ij}\right)_{\substack{1\leq i \leq d, \\ 0 \leq j \leq d}} 
  + {\alpha_v} \left( \overbar{B_1(0)} - \frac{[v] \otimes n}{\left|[v]\right|}\right).
 \end{equation*}
Hence $0 \in \partial f\left(v^- \otimes n\right)$ if and only if
\begin{equation*}
 \frac{{\alpha_F}}{{\alpha_v}} \left|[u]\right| \left(\delta_{ij}\right)_{\substack{1\leq i \leq d, \\ 0 \leq j \leq d}} - \frac{[v] \otimes n}{\left|[v]\right|} \in \overbar{B_1(0)}\subset\R^{d(d+1)}
\end{equation*}
(we recall that we are using the Euclidean norm on $\R^{d(d+1)}$).
Squaring the left hand side shows that this is equivalent to
\begin{equation*}
  \left(\frac{{\alpha_F}}{{\alpha_v}}\right)^2 \left|[u]\right|^2 d - 2 \frac{{\alpha_F}}{{\alpha_v}} \left|[u]\right| \frac{[w] \cdot n}{\left|[v]\right|} + 1\le 1
\end{equation*}
which in turn is the same as
\begin{equation*}
 \frac{{\alpha_F}}{{\alpha_v}} \left|[u]\right| d - 2 \frac{[w] \cdot n}{\left|[v]\right|} \le 0.
\end{equation*}
This holds since we are in the case $[w]\cdot n\ge 0$ and
by assumption   $2{\alpha_v}  |[w] \cdot n| \geq {\alpha_F} d  |[u]|  |[v]|$.

We finally prove \ref{item:simpletok}. The upper bound follows from Theorem \ref{theorem:bounds}\ref{item:ddbounds}  with $l=1$. To prove the lower bound, we observe that the construction in Lemma 
\ref{lem:coarea} does not modify the property of being one-dimensional, hence we can assume that $u,v\in \mathcal{A}^C$ are of the form 
$u=u^-+[u]\chi_{\omega} (y\cdot n)$, $v=\tilde v(y\cdot n)$, for some set of finite perimeter  $\omega\subset \R^{d+1}$ and some
function $\tilde v:\R \rightarrow \R^d$. Then,
setting  $t_*\in J_u=\partial\omega$,
\begin{equation*}
  E[u,v,Q_n] \ge {\alpha_u} |[u]| + {\alpha_v} \int_{-1/2}^{1/2} |\tilde v'|(t)  dt
  + {\alpha_F} |[u]| |(1,\tilde v(t_*))\cdot n|\,.
\end{equation*}
With $a=\tilde v(t_*)$ the assertion follows.
\end{proof}

\subsection{Illustration of expected microstructures for $d\geq 1$}\label{sec:illustration}
 
 \begin{figure}[h!,t]
\begin{tikzpicture}[scale=0.7]
\draw[->] (0,0,0) -- (0,4,0);
\draw[->] (0,0,0) -- (4,0,0);
\draw[->] (0,0,0) -- (0,0,-3);

\draw (0,0,-3) node[black, anchor=south]{$y$};
\draw (0,4,0) node[black, anchor=south]{$t$};
\draw (4,0,0) node[black, anchor=north]{$x$};

\draw[->] (2,2-4/7+3/28,-3/2) -- (2.5,2-4/7+3/28+1.75,-3/2+0.25);
\draw (2.5,2-4/7+3/28+1.75,-3/2+0.25) node[black,anchor=west]{$n$};

\fill[black,semitransparent] (0,2,0) -- (0,2+6/28,-3) -- (4,  2 + 6/28 - 8/7 ,-3) -- (4, 2 - 8/7 ,0) -- (0,2,0);
\draw (0,0,-3) node[black, anchor=south]{$y$};

\fill[gray!30,semitransparent] (0,2,0) -- (4,2 - 8/7 ,0) -- (4,4,0) -- (0,4,0) -- (0,2,0);
\fill[gray!30,semitransparent] (4,2 - 8/7 ,0) -- (4,  2 + 6/28 - 8/7 ,-3) -- (4,4,-3) -- (4,4,0) -- (4,2 - 8/7 ,0);
\fill[gray!30,semitransparent] (0,4,0) -- (0,4,-3) -- (4,4,-3) -- (4,4,0) -- (0,4,0);

\fill[gray!70,semitransparent] (4,2 - 8/7 ,0) -- (0,2,0) -- (0,0,0) -- (4,0,0) -- (4,2 - 8/7 ,0);
\fill[gray!70,semitransparent] (4,2 - 8/7 ,0) -- (4,  2 + 6/28 - 8/7 ,-3) -- (4,0,-3) -- (4,0,0) -- (4,  2 - 8/7 ,0);

\draw[dashed] (0,4,0) -- (4,4,0) -- (4,4,-3) -- (0,4,-3) -- (0,4,0);
\draw[dashed] (4,0,0) -- (4,0,-3) -- (4,4,-3) -- (4,4,0) -- (4,0,0);

\end{tikzpicture} \quad \quad
\includegraphics[width=0.27\textwidth]{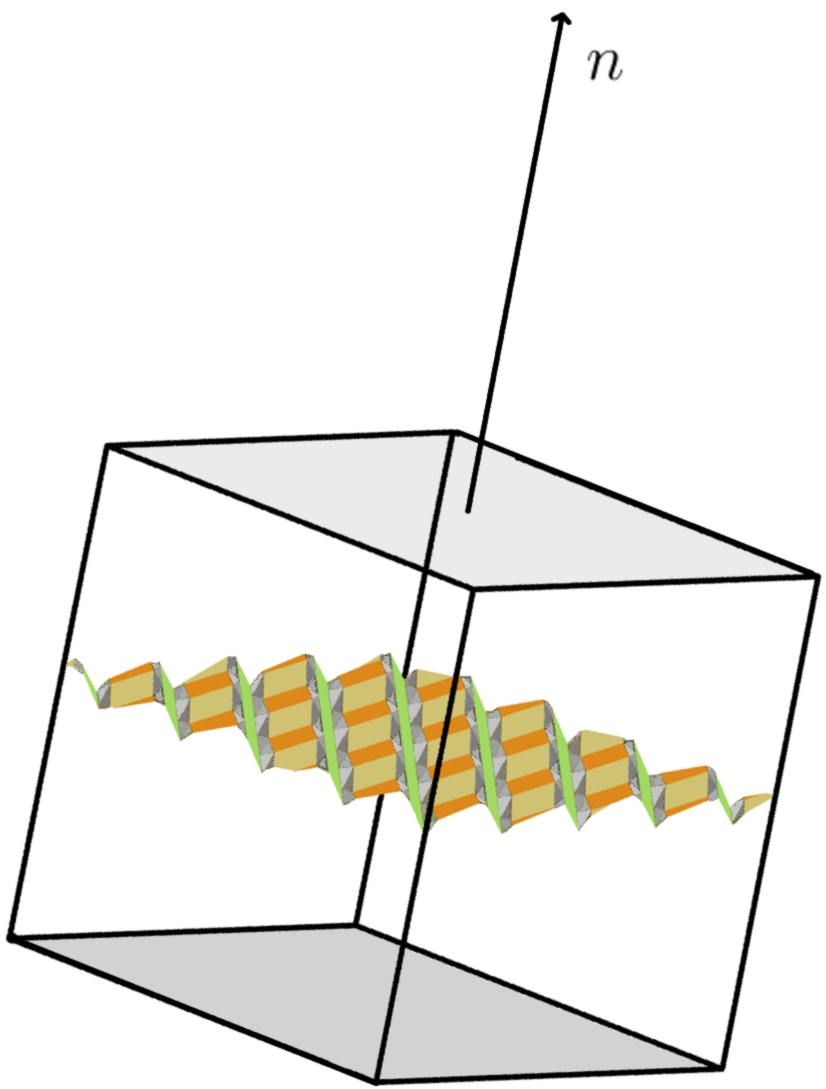} \quad \quad
\includegraphics[width=0.27\textwidth]{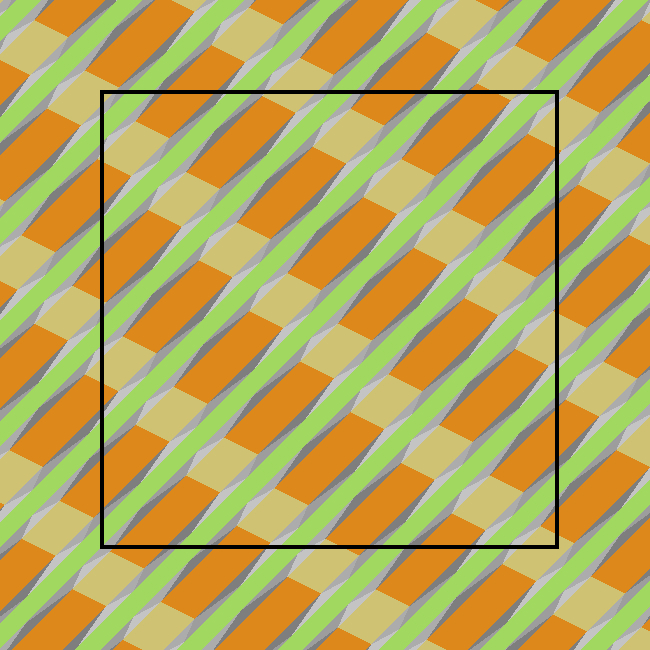} 
\caption{Sketch of a microscopic pattern corresponding to an explicit construction of the upper bound of $K$ for $d=2$ (left: macroscopic configuration, middle: microscopic pattern, right: top view of the pattern)\label{fig:pattern}}
\end{figure}
 
 In the higher dimensional case ($d>1$) the upper bound in Theorem \ref{theorem:bounds} (iii) refers to a set of vectors 
 ($N^1,\ldots, N^l$). In the case $d=1$ the corresponding vectors $N^+$ and $N^-$ explicitly appear in a geometric construction of a microscopic pattern in the 
 proof of Proposition \ref{propupperbound1d}.  Using the convexity of the microscopic energy $K$ stated in Lemma \ref{lemmacomvex} this explicit construction is 
 not required in the proof of Theorem \ref{theorem:bounds} (iii). Nevertheless, explicit microscopic patterns can be constructed, where the vectors $N^j$ are weighted normals 
 with $\frac{N^j}{|N^j|}$ being the normal on a set of interface facets of total area $|N^j|$.  Figure \ref{fig:pattern} sketches for $d=2$ such a microscopic pattern with three
 weighted normals 
 $$
 Q \left(\begin{array}{c} \frac14 \\ -\frac14 \\ \frac14 \end{array}\right),\; 
 Q \left(\begin{array}{c} -\frac12 \\ -\frac12 \\ \frac12 \end{array}\right),\; 
 Q \left(\begin{array}{c} \frac14 \\ \frac34 \\ \frac14 \end{array}\right)\,,
$$
where $Q$ is a suitable rotation in $\R^3$. The resulting effective interface normal $n=N^1+N^2+N^3= Q (0,0,1)^T$ is the macroscopic normal on the jump set $J_u$. The underlying pattern is based on nested lamination, i.e.~a lamination pattern of facets perpendicular to $N^2$ and $N^3$ (plotted in light and dark orange) is altered with facets perpendicular to $N^1$ (plotted in green). To compensate the lack of rank-$1$ consistency a thin transition pattern is introduced in between (plotted in grey).

\bibliographystyle{plain}

\bibliography{opticalflow.bib}

\end{document}